%% file: main_arxiv.tex
\documentclass[journal]{IEEEtran}

\makeatletter
\let\NAT@parse\undefined
\makeatother

\input{Preamble/packages}
\input{Preamble/macros}
\input{Preamble/glossaries}
\input{Preamble/tikz_utils}

\begin{document}
\bstctlcite{BSTcontrol}

\title{Compositional Online Learning for Multi-Objective System Co-Design}

\author{Meshal Alharbi, Munther A. Dahleh, Gioele Zardini
\thanks{The authors are with the Laboratory for Information \& Decision Systems,
Massachusetts Institute of Technology, Cambridge, MA 02139 {\tt \{meshal,dahleh,gzardini\}@mit.edu}
}
\thanks{This material is based upon work supported by the Defense Advanced Research Projects Agency (DARPA) under Award No. D25AC00373. The views and conclusions contained in this document are those of the authors and should not be interpreted as representing the official policies, either expressed or implied, of the U.S. Government.}
}

\maketitle

\input{Sections/abstract}
\input{Sections/introduction}

\input{Sections/literature}
\input{Sections/preliminaries}
\input{Sections/online_design}
\input{Sections/optimistic_elimination}
\input{Sections/online_codesign}
\input{Sections/case_study}

\input{Sections/conclusion}

\bibliographystyle{IEEEtran}
\bibliography{references}

\end{document}

%% file: Preamble/packages.tex
\usepackage[x11names]{xcolor}
\usepackage{cite}
\usepackage{amsmath, amsfonts, amssymb, amsthm, mathtools, dsfont, circledsteps, bm}
\usepackage{algorithm}
\usepackage{algpseudocodex}
\usepackage[hidelinks]{hyperref}
\hypersetup{
    colorlinks=false,
    linkcolor=Blue3,
    citecolor=Blue3,
    urlcolor=violet,
    filecolor=red,
}
\usepackage{cleveref}
\usepackage{xspace}
\usepackage{booktabs, nicematrix}
\usepackage{adjustbox}



%% file: Preamble/macros.tex
\definecolor{dpred}{rgb}{0.7, 0.0, 0.0}
\definecolor{dpgreen}{rgb}{0.0, 0.5, 0.0}
\definecolor{dppurple}{rgb}{0.5,0,0.5}
\newcommand{\F}[1]{\textcolor{dpgreen}{#1}}
\newcommand{\R}[1]{\textcolor{dpred}{#1}}
\newcommand{\I}[1]{\textcolor{DarkOrange2}{#1}}

\newcommand{\funPosetF}{{\F{\mathcal{F}}}}
\newcommand{\resPosetR}{{\R{\mathcal{R}}}}
\newcommand{\impSetI}{\mathcal{I}}
\newcommand{\op}{{\mathrm{op}}}

\newcommand{\opt}{{\mathrm{opt}}}

\newcommand{\dprb}{{\mathrm{dp}}}
\newcommand{\cdprb}{{\mathrm{cdp}}}
\newcommand{\dpOf}[2]{\mathsf{DP}\{#1, #2\}}
\newcommand{\dpiOf}[3]{\mathsf{DPI}\{#1, #2, #3\}}

\newcommand{\prov}{\F{\mathsf{prov}}}
\newcommand{\req}{\R{\mathsf{req}}}
\newcommand{\provImp}[1]{\prov(\I{#1})}
\newcommand{\reqImp}[1]{\req(\I{#1})}
\newcommand{\reqImpStar}[1]{\req(\I{#1}^\star)}
\newcommand{\provImpStar}[1]{\prov(\I{#1}^\star)}

\newcommand{\provOpt}{\F{\mathsf{prov}}^\mathrm{opt}}
\newcommand{\provPes}{\F{\mathsf{prov}}^\mathrm{pes}}
\newcommand{\reqOpt}{\R{\mathsf{req}}^\mathrm{opt}}
\newcommand{\reqPes}{\R{\mathsf{req}}^\mathrm{pes}}

\newcommand{\FixFunMinResOf}[1]{\mathsf{FixFunMinRes}(\F{#1})}

\newcommand{\OptAnti}{A_{\F{f}}}
\newcommand{\SubAnti}{\hat{A}_{\F{f}}}

\newcommand{\impPointIqJ}{\tupII{\I{i}_q}{\I{j}}}

\newcommand{\resref}{\R{r}^\mathrm{ref}}

\newcommand{\HVOf}[1]{\mathsf{HV}(#1)}
\newcommand{\HVD}{\mathsf{HVD}}
\newcommand{\HVDOf}[2]{\mathsf{HVD}(#1, #2)}

\newcommand{\hist}{\mathcal{H}}

\newcommand{\HALTON}{\textsc{Halton}\xspace}

\newcommand{\NSGAIII}{\textsc{NSGA-III}\xspace}
\newcommand{\MOEAD}{\textsc{MOEA/D}\xspace}
\newcommand{\RVEA}{\textsc{RVEA}\xspace}
\newcommand{\KGB}{\textsc{KGB}\xspace}
\newcommand{\qLogEHVI}{\textsc{qLogEHVI}\xspace}
\newcommand{\OPTELIM}{\textbf{\textsc{Ours}}\xspace}

\newcommand{\TableVerStretch}{1.1}
\newcommand{\ul}[1]{\underline{#1}}

\newcommand{\set}[1]{\ensuremath{\{ #1 \}}}
\newcommand{\card}[1]{\ensuremath{\lvert #1 \rvert}}
\newcommand{\tup}[1]{\ensuremath{\langle #1 \rangle}}
\newcommand{\tupII}[2]{\ensuremath{\langle #1, #2 \rangle}}
\newcommand{\tupIII}[3]{\ensuremath{\langle #1, #2, #3 \rangle}}

\DeclareMathOperator*{\anti}{\mathsf{Anti}}

\DeclareMathOperator*{\pow}{\mathsf{Pow}}
\newcommand{\antiOf}[1]{\mathsf{Anti}_{#1}}
\newcommand{\solveOf}[1]{\mathsf{Solve}_{#1}}
\newcommand{\powOf}[1]{\pow(#1)}

\NewCommandCopy{\ordinaryforall}{\forall}
\NewCommandCopy{\ordinaryexists}{\exists}
\NewCommandCopy{\ordinarynexists}{\nexists}
\RenewDocumentCommand{\forall}{}{\mathop{{}\ordinaryforall}}
\RenewDocumentCommand{\exists}{}{\mathop{{}\ordinaryexists}}
\RenewDocumentCommand{\nexists}{}{\mathop{{}\ordinarynexists}}

\newcommand{\upper}{\operatorname{\uparrow}}

\newcommand\numberthis{\addtocounter{equation}{1}\tag{\theequation}}

\newenvironment{talign*}
 {\csname align*\endcsname}
 {\endalign}

\theoremstyle{definition}
\newtheorem{definition}{Definition}[section]

\newtheorem{assumption}{Assumption}

\crefname{assumption}{assumption}{assumptions}
\Crefname{assumption}{Assumption}{Assumptions}

\newtheorem{problem}{Problem}[section]

\newtheorem{theorem}{Theorem}[section]
\newtheorem{corollary}{Corollary}[section]

\newtheorem{proposition}{Proposition}[section]

\newtheorem{example}{Example}[section]

\newtheorem{remark}{Remark}[section]

\usepackage{enumitem}
\newtheorem{lemma}{Lemma}[section]

\Crefname{lemma}{Lemma}{Lemmas}
\Crefname{lemenumi}{Lemma}{Lemmas}
\AtBeginEnvironment{lemma}{%
    \crefalias{enumi}{lemenumi}%
    \setlist[enumerate,1]{
        label={\textit{(\alph*)}},
        ref={\thelemma.\alph*}
    }%
}

%% file: Preamble/tikz_utils.tex
\usetikzlibrary{positioning,intersections, hobby, patterns, calc, decorations.pathmorphing, decorations.markings, shadows,shapes, cd,arrows.meta, fit,quotes}

\tikzset{
   tick/.style={postaction={
      decorate,
      decoration={markings, mark=at position 0.5 with {\draw[-] (0,.4ex) -- (0,-.4ex);}}}
   }
}
\tikzstyle{block} = [draw, rectangle, minimum height=2em, minimum width=3em,font=\bfseries,rounded corners,thick]
\tikzstyle{block} = [draw, rectangle, minimum height=2em, minimum width=3em]
\tikzstyle{block1} = [draw, rectangle, minimum height=1.5em, minimum width=2.5em]
\tikzstyle{blockDyn} = [draw, rectangle, minimum height=2.5em, minimum width=3.5em, align=center, inner sep=10pt, thick, fill=white, copy shadow={draw=black,fill=black,opacity=1,shadow xshift=0.5ex,shadow yshift=-0.5ex}]
\tikzstyle{blockAlg} = [draw, rectangle, minimum height=1.5em, minimum width=2.5em, align=center, inner sep=10pt, thick]
\tikzstyle{sum} = [draw,circle]

\tikzstyle{nodePre} = [circle, draw,inner sep=1pt,node contents={$\preceq$},thick]
\tikzstyle{nodePreEmpty} = [circle, draw,inner sep=1pt,thick]
\tikzstyle{nodePos} = [circle, draw,inner sep=1pt,node contents={$\posceq$},thick]
\tikzstyle{nodeProd} = [rectangle, draw,inner sep=4pt,node contents={$\times$},rounded corners,thick]
\tikzstyle{nodeSum} = [rectangle, draw,inner sep=4pt,node contents={$\mathbf{+}$},rounded corners,thick]

\definecolor{red}{rgb}{0.75, 0.0, 0.0}

\tikzset{
  dp port dot scale/.store in=\dpportdotscale,
  dp port dot scale=0.3
}
\tikzset{fcname/.store in =\fcname, fcname={}}
\tikzset{funame/.store in =\funame, funame={}}
\tikzset{rcname/.store in =\rcname, rcname={}}
\tikzset{runame/.store in =\runame, runame={}}
\tikzset{whereres/.store in =\whereres, whereres=0.5}
\tikzset{wherefun/.store in =\wherefun, wherefun=0.5}
\tikzset{relres/.store in =\relres, relres={above}}
\tikzset{relfun/.store in =\relfun, relfun={above}}
\tikzset{posres/.store in =\posres, posres=1}
\tikzset{posfun/.store in =\posfun, posfun=1}
\tikzset{loos/.store in =\loos, loos=2}
\tikzset{feedback/.store in =\feedback, feedback=0}
\tikzset{
   DP/.style={
      label/.style={
         font=\everymath\expandafter{\the\everymath\scriptstyle},
         inner sep=5pt,
         node distance=2pt and -2pt},
      semithick,
      node distance=1 and 1,
      rconn/.style={color=white,opacity=0.0,postaction={decorate}, shorten <=3.2pt, shorten >= 0.8,
      decoration={markings, 
      mark= at position 0 with {
               \coordinate (a);
      },
      mark=at position .5 with
      {
              \ifthenelse{\equal{\feedback}{1}}{\def\angleOut{90}\def\angleIn{90}}{\def\angleOut{0}\def\angleIn{180}}    
              \coordinate (b);
              \draw[dashed,dpred,opacity=1.0] (a) to[out=\angleOut,in=\angleIn,looseness=\loos] 
              node[pos=\posres,\relres=\whereres mm,dpred,opacity=1,fill=white,inner sep=1pt,outer sep=1pt]{\footnotesize{\rcname}} (b);
      },
      mark= at position 1 with 
      {
             \ifthenelse{\equal{\feedback}{1}}{\def\angleOut{0}\def\angleIn{0}}{\def\angleOut{180}\def\angleIn{0}} 
              \ifthenelse{\equal{\feedback}{1}}{\def\symbol{\succeq}}{\def\symbol{\preceq}} 
              \coordinate (c);
              \draw[dpgreen,opacity=1.0] (c) to[out=\angleOut,in=\angleIn,looseness=\loos]
              node[pos=\posfun,\relfun=\wherefun mm,dpgreen,opacity=1,fill=white,inner sep=1pt,outer sep=1pt]{\footnotesize{\fcname}} (b){}; 
              \node[draw,circle,inner sep=0.5pt,color=black,fill=white,opacity=1.0] at (b) (nodepreceq) {$\symbol$}; 
      }
      }},
      runconn/.style={color=dpred,dashed,postaction={decorate},
      decoration={markings,
      mark= at position 1 with {
              \coordinate (a);
              \draw[dpred,opacity=1.0,dashed] ($(a)+(0.05,0)$) --++ (0.5,0) node[\relres,pos=\posres]{\footnotesize{\runame}};}
      }
      },
      funconn/.style={color=white,postaction={decorate},
      decoration={markings,
      mark= at position 0 with {
      \coordinate (a);
      \draw[dpgreen] ($(a)+(-0.05,0)$) -- ($(a)+(-0.5,0)$) node[\relfun, pos=\posfun]{\footnotesize{\funame}};}
      }
      },
      execute at begin picture={\tikzset{
         x=\dpx, y=\dpy,
         every fit/.style={inner xsep=\dpx, inner ysep=\dpy}}}
      },
   dpx/.store in=\dpx,
   dpx = 1.5cm,
   dpy/.store in=\dpy,
   dpy = 1.5ex,
   dp port sep/.store in=\dpportsep,
   dp port sep=2,
   dp port length/.store in=\dpportlen,
   dp port length=4pt,
   dp min width/.store in=\dpminwidth,
   dp min width=0.5cm,
   dp rounded corners/.store in=\dpcorners,
   dp rounded corners=2pt,
   dp small/.style={dp port sep=1, dp port length=2.5pt, dpx=.4cm, dp min width=.4cm, dpy=.7ex},
   dp/.code 2 args={
      \pgfmathsetlengthmacro{\dpheight}{\dpportsep * (max(#1,#2)) * \dpy}
      \pgfkeysalso{draw,%
        minimum width=\dpminwidth,%
        minimum height=\dpheight,%
        font=\bfseries,
        outer sep=0pt,%
        inner sep=5pt,%
        rounded corners=\dpcorners,
        thick,
        prefix after command={\pgfextra{\let\fixname\tikzlastnode}},
        append after command={\pgfextra{\draw
            \ifnum #1=0{} \else foreach \i in {1,...,#1} { 
            ($(\fixname.north west)!{\i/(#1+1)}!(\fixname.south west)$) +(0,0) node[solid,left,circle,color=dpgreen,draw,fill=dpgreen,scale=\dpportdotscale] {} coordinate (\fixname_fun\i) -- +(0,0) coordinate (\fixname_fun\i')}\fi 
            \ifnum #2=0{} \else foreach \i in {1,...,#2} {
            ($(\fixname.north east)!{\i/(#2+1)}!(\fixname.south east)$) +(0,0) coordinate (\fixname_res\i') -- +(0,0) node[solid,right,circle,color=dpred,draw,fill=dpred,scale=\dpportdotscale] {} coordinate (\fixname_res\i)}\fi;
         }}}
         },
      dp name/.style={append after command={\pgfextra{\node[label=center,inner sep=2pt,fill=white] at (\fixname) {\textbf{#1}};}}}
   }

%% file: Sections/abstract.tex
\begin{abstract}
Many engineered systems must balance competing objectives, such as performance and safety, cost and reliability, or efficiency and sustainability, and are naturally modeled as compositions of interacting subsystems. 
We study online multi-objective decision-making in monotone co-design, where functionalities and resources are partially ordered, and the goal is to identify the target-feasible antichain of non-dominated trade-offs using few expensive evaluations. 
We introduce \emph{optimistic evaluators}: history-dependent bounds on functionality and resource mappings that enable safe elimination of implementations before full evaluation. 
Based on these evaluators, we develop an elimination-based rejection-sampling algorithm, prove its soundness, and show that the admissible region shrinks monotonically as information accumulates. 
We instantiate the framework under monotonicity, Lipschitz continuity, and linear-parametric structure. 
For compositional co-design problems modeled by multigraphs, we show how local optimistic certificates propagate through the tractable remainder of the graph to yield system-level optimistic feasibility and resource bounds. 
Experiments on multi-robot fleet design, intermodal mobility systems, and synthetic monotone and Lipschitz benchmarks show substantial sample-efficiency gains over uniform sampling, Bayesian optimization, and multi-objective evolutionary algorithms.
\end{abstract}

%% file: Sections/introduction.tex
\section{Introduction}
\label{sec:introduction}

Many engineered systems must balance competing criteria such as performance and safety, cost and reliability, or efficiency and sustainability.
In such settings, there is rarely a single optimal solution; instead, one must reason directly about a set of non-dominated trade-offs.
This perspective is becoming increasingly important in modern engineered systems, from robotics and autonomous mobility to society-critical infrastructure planning, where overall behavior emerges from the interaction of multiple subsystems and multiple stakeholders.
In these settings, optimizing one component at a time, or collapsing all objectives into a single scalar reward, can obscure critical trade-offs and lead to brittle or opaque system-level decisions~\cite{ehrgott2005multicriteria,zardini2023co}.

Although multi-objective formulations are natural in many applications, they are difficult to solve for at least three distinct reasons.
First, trade-offs typically arise from the coupling of multiple subsystems, which induces large search spaces and strong dependencies across objectives. 
Second, even when the full model is known, recovering the entire efficient set of solutions is generally computationally demanding;
for example, even in \glspl{acr:molp}, computational complexity depends strongly on what representation of the efficient set is sought and on the size of that representation~\cite{moitra2011pareto,dorfler2022benson}. 
Third, in black-box settings where the map from decisions to outcomes can only be queried, sequential methods may exhibit exponential computational scaling in the number of objectives, even in discrete domains~\cite{crepon2024sequential}. 
Taken together, these obstacles suggest that scalable online methods must explicitly exploit structural properties of the problem to enable efficient decision-making.

Over the past few years, monotone co-design theory has emerged as a powerful response to these challenges, especially for complex engineered systems built from many interacting parts~\cite{censi2015mathematical, zardini2021co, zardini2022task, censi2022, zardini2023co, stralz2026task}.
Its central advantage is \emph{compositionality}: each subsystem is described through interfaces of functionalities and resources, both modeled as partially ordered sets, and larger systems are obtained by composing smaller ones~\cite{zardini2023co}. 
This perspective is appealing for two reasons. 
On the modeling side, it preserves the native order structure of the quantities being traded, rather than forcing them into a fixed Euclidean representation. 
On the algorithmic side, compositionality enables system-level reasoning through structured operations on smaller blocks, making it possible to analyze rich trade-offs without flattening the problem into a single monolithic optimization.

At the same time, existing co-design methods typically assume that each atomic block is already available in explicit form, for instance, as a finite catalog of implementations or through a tractable model from which functionality--resource trade-offs can be computed efficiently. 
In many practical settings, however, the most consequential block is precisely the one that is not available in this form: it may only be accessible through expensive simulation, hardware-in-the-loop experiments, or another black-box evaluation. 
This gap motivates an \emph{adaptive sampling} approach: rather than exhaustively exploring the implementation space, the agent should focus its evaluations on the most informative candidates, leveraging whatever structural knowledge is available to prune away regions that provably cannot contribute to optimal trade-offs.

\subsubsection*{Statement of contribution}
This paper addresses that gap and formulates online multi-objective learning directly in the co-design setting, preserving the system's native order structure rather than imposing a global metric or scalarization. 
The overall system is treated compositionally and order-theoretically, and stronger assumptions are used only locally, so continuous and discrete components can coexist within the same compositional model. 
This yields a framework that learns expensive subsystems through the graph while targeting target-feasible antichains directly.

The main contributions of the paper are as follows.
First, we formulate an online multi-objective learning problem over partially ordered functionality and resource spaces, where the object to be recovered is the antichain of non-dominated resources compatible with a desired functionality target.
Second, we introduce generic optimistic and pessimistic evaluators, establish safe elimination and monotone shrinking properties, and derive an elimination-based rejection sampler that focuses evaluations on implementations that remain relevant to the current target-feasible frontier.
Third, we instantiate the evaluator template under several common forms of structure, including monotonicity, Lipschitz continuity, and linear-parametric models, showing how problem-specific regularity yields computable optimistic certificates.
Fourth, we extend the framework to multigraph co-design problems and show how local optimistic certificates for an expensive atomic block propagate through the tractable remainder of the graph to yield system-level optimistic feasibility and resource bounds.
Finally, we provide rigorous correctness guarantees for the elimination mechanism and evaluate the resulting algorithms on heterogeneous multi-agent fleet co-design, intermodal mobility co-design, and synthetic benchmarks with monotone and Lipschitz structure.

%% file: Sections/literature.tex
\section{Related Literature}
\label{sec:literature}

This work lies at the intersection of compositional system design, multi-objective optimization, and sequential decision-making.
We give a brief review of the literature in these fields.

\subsection{Multi-Objective Evolutionary Algorithms}

Designing complex engineered systems entails coordinated decisions across coupled subsystems, yielding multidisciplinary optimization problems that are often costly to evaluate and require methods able to handle black-box models and mixed or structured variables~\cite{ehrgott2005multicriteria, martins2013multidisciplinary, seshia2016design, zardini2023co}.
Multi-objective \glspl{acr:ea} are one of the dominant classes of methods used in practice for approximating Pareto fronts in such black-box settings~\cite{liu2023survey,li2024survey}.
At a high level, \glspl{acr:ea} maintain a \emph{population} of candidate solutions that is iteratively updated through selection, crossover, and mutation operators.
Two prevailing strategies for guiding the search toward the Pareto front are (i) \emph{dominance-based} methods, which use non-dominated sorting and crowding mechanisms (e.g., \cite{deb2013evolutionary}); and (ii) \emph{decomposition-based} methods, which convert the multi-objective problem into a collection of cooperatively solved scalar subproblems (e.g., \cite{zhang2007moea}).
A key concern in multi-objective optimization is how to measure the quality of an approximate Pareto front~\cite{li2019quality}, and \glspl{acr:ea} are commonly assessed using quality indicators such as hypervolume and epsilon-based metrics.

\subsection{Multi-Objective Bayesian Optimization}

In contrast to population-based \gls{acr:ea} methods, \gls{acr:bo} maintains an explicit probabilistic surrogate of the objective functions and uses it to select the next design to evaluate, allowing information from queried designs to generalize to unqueried ones~\cite{wang2023recent}.
Within this family, methods can be broadly organized by the acquisition criterion they employ:
\emph{Scalarization-based} methods reduce the problem to a single objective by randomizing over weight vectors~\cite{knowles2006parego};
\emph{Hypervolume-based} methods directly target the expected improvement in dominated hypervolume~\cite{daulton2020differentiable, ament2023unexpected};
\emph{Information-theoretic} methods select points that maximize the expected information gain about the Pareto front~\cite{hernandez2016predictive, belakaria2019max}.
Two especially relevant examples are $\epsilon$-PAL~\cite{zuluaga2016pal}, which eliminates designs using Gaussian process confidence bounds, and FlexiBO~\cite{iqbal2023flexibo}, which studies cost-aware decoupled objective evaluations. 
Our approach differs from standard \gls{acr:bo} by exploiting deterministic structural properties rather than posterior uncertainty and repeated global surrogate refitting.

\subsection{Multi-Objective Bandits}

Multi-objective bandits study sequential identification of Pareto-optimal arms from vector-valued stochastic rewards~\cite{drugan2013designing,busa2017multi}.
Unlike \gls{acr:bo}, they typically do not rely on a global surrogate model, and unlike \glspl{acr:ea}, they more strongly emphasize sample-complexity guarantees for sequential exploration.
A central idea in the bandits literature is the \emph{optimism under uncertainty} principle, in which confidence sets guide exploration~\cite{lattimore2020bandit}.
Classical fixed-confidence results give gap-dependent sample-complexity bounds for Pareto set identification~\cite{auer2016pareto}.
More recent sequential-testing approaches target asymptotically optimal instance-dependent sample complexity, but can introduce a nontrivial computational cost for solving the associated identification subproblems~\cite{crepon2024sequential}. 
Related work also extends Pareto identification beyond the standard componentwise order to more general preference cones, and to structured linear models where the effective difficulty is governed by a lower-dimensional representation rather than all arms independently~\cite{ararat2023vector,kone2025bandit}.

Our approach differs from prior work in two key ways. 
First, it operates on general \emph{partially ordered} functionality and resource spaces, without fixing a particular topology or performance metric. 
The overall system is treated order-theoretically, and additional structure, such as monotonicity or metric smoothness, is optional and enters only through specific optimistic evaluators.
Second, it treats system design as a \emph{compositional} multi-objective problem, using the co-design multigraph to determine which implementations must be actively explored and which can be tractably resolved.
This yields a modular framework that accommodates heterogeneous structure and confines online learning to the expensive subsystems.

%% file: Sections/preliminaries.tex
\section{Preliminaries}
\label{sec:preliminaries}

This section introduces mathematical preliminaries underlying the concepts used throughout the paper, together with a brief primer on system co-design.

\subsection{Background on Order}

We begin by recalling the order-theoretic notions that underpin our framework.

\begin{definition}[\glsdefhere{\Glsxtrshort}{acr:poset}]
A \textit{\glsxtrfull{acr:poset}} is a tuple $\mathcal{P} = \tupII{P}{\preceq_\mathcal{P}}$, where $P$ is a set and $\preceq_\mathcal{P}$ is a partial
order (a binary relation on~$P$ that is reflexive, antisymmetric, and transitive).
\end{definition}
For a \poset
$\mathcal{P} = \tupII{P}{\preceq_\mathcal{P}}$, we write $x \prec_\mathcal{P} y$ when
$x \preceq_\mathcal{P} y$ and $x \neq y$. A \emph{total order} is a \poset in which every pair of elements is comparable.

\begin{definition}[Opposite \gls{acr:poset}]
\label{def:opposite-poset}
The \emph{opposite} of a \gls{acr:poset}~$\mathcal{P} = \tup{ P,\preceq_\mathcal{P}}$ is the poset $\mathcal{P}^\op \coloneq \tupII{P}{ \preceq_{\mathcal{P}}^\op }$ with the same elements and reversed ordering:
$
x \preceq_{\mathcal{P}}^\op y \Leftrightarrow 
y \preceq_{\mathcal{P}} x
$.
\end{definition}

\begin{definition}[Product \glsxtrshort{acr:poset}]
Given two \glsxtrshortpl{acr:poset} $\mathcal{P} = \langle P, \preceq_\mathcal{P} \rangle$ and $\mathcal{Q} = \langle Q, \preceq_\mathcal{Q} \rangle$, their \textit{product} $\mathcal{P} \times \mathcal{Q} = \langle P \times Q, \preceq_{\mathcal{P} \times \mathcal{Q}} \rangle$ is a \glsxtrshort{acr:poset} with:
$\tupII{x_P}{x_Q} \preceq_{\mathcal{P} \times \mathcal{Q}} \tupII{y_P}{y_Q} \Leftrightarrow (x_P \preceq_\mathcal{P} y_P) \land (x_Q \preceq_\mathcal{Q} y_Q)$.
%
%
\end{definition}

\begin{definition}[Antichain in a \glsxtrshort{acr:poset}]
An \textit{antichain} is a subset $S$ of a \gls{acr:poset} $\mathcal{P}$ where no two distinct elements in $S$ are comparable:
$(x,y \in S) \land (x \preceq_\mathcal{P} y) \Rightarrow x = y$.
%
%
We denote the set of antichains of a \poset $\mathcal{P}$ by $\anti \mathcal{P}$.
\end{definition}

\begin{definition}[Upper closure]
Given a \gls{acr:poset} $\mathcal{P}$, the \textit{upper closure} of a subset $S \subseteq \mathcal{P}$ is a subset $\upper S \subseteq \mathcal{P}$ that contains all elements $y$ of $\mathcal{P}$ that are greater or equal to some element $x$ in $S$:
$\upper S = \set{ y \in P \mid \exists x \in S : x \preceq_\mathcal{P} y}$
%
%
\end{definition}

\begin{definition}[Monotone map]
A map $f \colon \mathcal{P} \to \mathcal{Q}$ between \posets \tupII{P}{\preceq_\mathcal{P}} and \tupII{Q}{\preceq_\mathcal{Q}} is \emph{monotone} if $x \preceq_\mathcal{P} y$ implies $f(x) \preceq_\mathcal{Q} f(y)$. Monotonicity is preserved by composition and products.
\end{definition}

For a subset~$S\subseteq P$ of a \poset $\mathcal{P}$, we write
$\min_{\preceq_\mathcal{P}} S \coloneq \set{ x \in S \mid \nexists y \in S : y \prec_\mathcal{P} x }$
for the set of minimal elements of~$S$, and
$\max_{\preceq_\mathcal{P}} S \coloneq \set{ x \in S \mid \nexists y \in S : x \prec_\mathcal{P} y }$
for the set of maximal elements. 
Whenever nonempty, both~$\min_{\preceq_\mathcal{P}} S$ and~$\max_{\preceq_\mathcal{P}} S$ are antichains.
When $\mathcal{P}$ is a lattice, we write $\bigvee S$ and $\bigwedge S$ for the join
(supremum) and meet (infimum) of $S$, respectively, whenever they exist.

The classical multi-objective optimization setting is recovered by taking the objective space to be~$\mathbb{R}^m$ equipped with the componentwise order.
In that case, weak Pareto dominance is simply the ambient partial order, and Pareto-optimal points are precisely the minimal (or maximal) elements of the feasible set~\cite{ehrgott2005multicriteria}.
The present paper works instead with arbitrary \posets, which is the natural level of generality for co-design problems in which quantities may only be partially comparable.


\subsection{Background on Co-Design Theory}

Co-design provides a compositional language for reasoning about systems with heterogeneous interfaces and coupled design decisions.
Its basic objects are \emph{\F{functionalities}} and \emph{\R{resources}}, modeled as \posets.
Informally, more functionality is preferred, whereas fewer resources are preferred.

\begin{definition}[\glsdefhere{\Glsxtrfull}{acr:dp}]
\label{def:dp}
Given \glspl{acr:poset}~$\funPosetF$ and $\resPosetR$ of \F{functionalities} and \R{resources}, a \emph{\glsxtrfull{acr:dp}} is an upper set of $\funPosetF^\op \times \resPosetR$.
We denote the set of such \glspl{acr:dp} by $\dpOf{\funPosetF}{\resPosetR}$.
Given a \gls{acr:dp} $\dprb$, a pair $\tup{\F{f}, \R{r}}$ of functionality $\F{f}$ and resource $\R{r}$ is \emph{feasible} if $\tup{\F{f}, \R{r}} \in \dprb$.
We order $\dpOf{\funPosetF}{\resPosetR}$ by inclusion: $\dprb_{a} \preceq \dprb_{b} \Leftrightarrow \dprb_{a} \subseteq \dprb_{b}$.
%
\end{definition}

\begin{definition}[\Gls{acr:dpi}]
\label{def:dpi}
Given \glspl{acr:poset}~$\funPosetF$ and $\resPosetR$ of \F{functionalities} and \R{resources}, a \emph{\glsxtrfull{acr:dpi}} is a tuple \tupIII{\impSetI}{\prov}{\req}, with a set of \I{implementations} $\impSetI$ and maps $\prov \colon \impSetI \to \funPosetF$ and $\req \colon \impSetI \to \resPosetR$.
For each design choice $\I{i} \in \impSetI$, $\provImp{i}$ represents the \F{functionality} provided by $\I{i}$, while $\reqImp{i}$ represents the \R{resource} it requires.
We denote the set of such \glspl{acr:dpi} by $\dpiOf{\impSetI}{\funPosetF}{\resPosetR}$.
\end{definition}

For each \glsxtrshort{acr:dpi}, there is a corresponding \gls{acr:dp} given by the free choice among all implementations: 
$\upper \set{ \tupII{\provImp{i}}{\reqImp{i}} \mid \I{i} \in \impSetI} \in \dpOf{\funPosetF}{\resPosetR}$.

\begin{definition}[Queries for \gls{acr:dpi}]
\label{def:fix-fun-min-res}
Suppose we have a \gls{acr:dpi}~\tupIII{\impSetI}{\prov}{\req}, whose functionality and resource \gls{acr:poset} are $\funPosetF$ and $\resPosetR$, respectively.
We define the \emph{Fix \F{functionality} minimize \R{resources}} query:
For a fixed functionality~$\F{f} \in \funPosetF$,
return implementations~$\I{i}$ that satisfy~$\F{f}$ and minimize the resources 
(i.e., $\reqImp{i} \in \min_{\preceq_\resPosetR} \set{ \reqImp{j} \mid \F{f} \preceq_\funPosetF \provImp{j}, \I{j} \in \impSetI} $.
We denote the set of minimal resources for this query by $\FixFunMinResOf{f} \in \anti \resPosetR$.
\end{definition}

\input{Figures/design_problem}

A \gls{acr:dp} is a generalization of a black-box multi-objective problem~\cite{ehrgott2005multicriteria,censi2022}, where one only models the space of design choices $\impSetI$ and a mapping $\req$ from design choices to objectives. In \glspl{acr:dp}, we also model what satisfying the requirements provides through the functionality space $\funPosetF$ and the map $\prov$, which allows us to compose a \gls{acr:dp} with other \glspl{acr:dp} to construct larger problems in a modular way~(\Cref{fig:dp_composition}).

\begin{example}[Observer--sensor design]
\label{ex:observer_sensor}
Consider the following open-loop \gls{acr:lti} discrete-time system:
\begin{align}
    x_{t+1} &= Ax_t + w_t, \\
    y_t &= Cx_t + v_t,
\end{align}
with $x_t \in \mathbb{R}^n$, $y_t \in \mathbb{R}^p$, and Gaussian noises $w_t \sim \mathcal{N}(0, W)$ and $v_t \sim \mathcal{N}(0, V)$ with covariances $W$ and $V$, respectively.
A system designer jointly selects sensing hardware and a state-estimation algorithm \cite{zardini2021co,zardini2022task}.
Specifically, the \textbf{Sensor} hardware is modeled by a \gls{acr:dp} $\dprb_{\mathrm{sen}}$ that requires \R{cost} ($\mathbb{R}_{\geq 0}$) and \R{power} ($\mathbb{R}_{\geq 0}$), and provides \F{noise-covariance} $V$ ($\mathbb{S}_+^p$).
The covariance matrices are ordered by the reverse Loewner order; that is, for $V, V' \in \mathbb{S}_+^p$,
\begin{align}
    V' \preceq V
    \iff
    x^\top V x \leq x^\top V' x \quad \forall x \in \mathbb{R}^p.
\end{align}
Thus, the monotonicity of $\dprb_{\mathrm{sen}}$ captures the intuition that smaller covariance is preferred, although achieving it may require more resources.
The \textbf{Observer} design is modeled by a \gls{acr:dp} $\dprb_{\mathrm{obs}}$ that requires an upper bound on \R{noise-covariance} $\overline{V}$ ($\mathbb{S}_+^p$) and provides a certified \F{error-covariance bound} $E$ ($\mathbb{S}_+^n$) for state estimation.
Connecting $\dprb_{\mathrm{sen}}$ and $\dprb_{\mathrm{obs}}$ in series expresses that, when sufficiently informative sensing is available, a desired estimation performance can be guaranteed.
\end{example}


\subsection{Performance Metrics}
\label{sec:performance_metrics}
To evaluate the quality of an approximate antichain produced by an online algorithm, we need a way to quantify how well a candidate set of non-dominated trade-offs approximates the true optimal set.
We separate two levels of structure. 
At the weakest level, only the partial order on $\resPosetR$ is available; at a richer level, one may also require topology, geometry, or measure on $\resPosetR$.

Let $\OptAnti \coloneq \FixFunMinResOf{f}$ denote the true minimal antichain for a fixed target functionality, and let $\SubAnti\subseteq \resPosetR$ be an approximate (i.e., suboptimal) antichain.
For an arbitrary \poset~$\resPosetR$, the canonical comparison is via dominated regions: we say that~$A$ \emph{dominates}~$B$ if~$\upper A \supseteq \upper B$.
Accordingly, exact recovery at the level of sets means~$\SubAnti=\OptAnti$, whereas exact recovery at the level of dominated regions means~$\upper \SubAnti = \upper \OptAnti$.

Richer structure on $\resPosetR$ enables richer metrics.
When~$\resPosetR = \mathbb{R}^m$ with the componentwise order, a Lebesgue measure is available and one may use quantitative indicators.
Given a fixed reference point~$\resref \in \mathbb{R}^m$, the \emph{hypervolume indicator} of an antichain~$A\subseteq \mathbb{R}^m$ is
\begin{align}
    \HVOf{A} = \lambda^m \Big( \big\{ \R{r} \in \mathbb{R}^m \mid \exists \R{a} \in A: \R{a} \preceq_\resPosetR \R{r} \preceq_\resPosetR \resref \big\} \Big),
    \label{eq:hv}
\end{align}
where $\lambda^m$ is the $m$-dimensional Lebesgue measure.
This quantity measures the size of the region (i.e., the Lebesgue volume) dominated by $A$ inside the box defined by the reference point~\cite{li2019quality}.
The hypervolume indicator is a standard unary quality indicator known to be strictly Pareto-compliant: if one antichain dominates another, it has strictly larger hypervolume~\cite{zitzler2003performance}.
We adopt the \emph{hypervolume difference} as a concrete performance metric over $\mathbb{R}^m$:
\begin{align}
    \HVDOf{\OptAnti}{\SubAnti}
    = 
    \HVOf{\OptAnti} - \HVOf{\SubAnti} \geq 0.
    \label{eq:hvd}
\end{align}
The hypervolume difference is zero if and only if $\OptAnti$ dominates the same region as $\SubAnti$ (up to measure zero), and it increases as the approximate front~$\SubAnti$ becomes worse. 
There are other quality metrics, such as the generational distance, inverted generational distance, and epsilon-indicator, each capturing different aspects of approximation quality (see~\cite{li2019quality} for a comprehensive survey).

We emphasize that the algorithmic framework developed in the subsequent sections is \emph{agnostic to the specific choice of metric}.
The order-theoretic machinery only requires the partial orders on $\funPosetF$ and $\resPosetR$.
Quantitative indicators such as~\eqref{eq:hvd} enter only in numerical experiments that explicitly assume Euclidean structure.

%% file: Figures/design_problem.tex
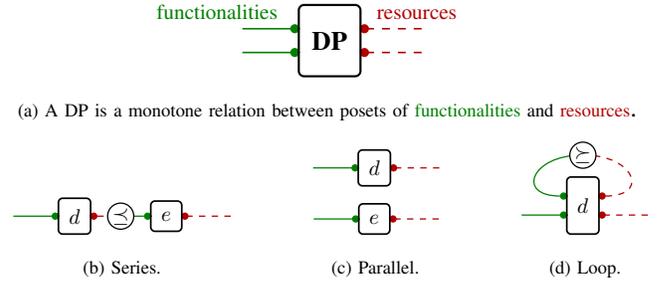
\begin{figure}[tb]
\centering

\begin{minipage}{\columnwidth}
    \centering
    \makebox[\linewidth][c]{%
    \hspace*{-5.5mm}%
    \begin{tikzpicture}[DP]
        \node[dp={2}{2}] (cnt) {DP};
        \draw[runconn, runame={resources}, relres=above, posres=0.9] (cnt_res1){};
        \draw[runconn, runame={}, relres=above, posres=0.9] (cnt_res2){};
        \draw[funconn, funame={functionalities}, relfun=above, posfun=1.5] (cnt_fun1){};
        \draw[funconn, funame={}, relfun=above, posfun=1.15] (cnt_fun2){};
    \end{tikzpicture}
    }

    \smallskip

    {\scriptsize (a) A \gls{acr:dp} is a monotone relation between
    \posets of \F{functionalities} and \R{resources}}.
    \label{fig:mathcodesign}
\end{minipage}

\vspace{8pt}

\begin{minipage}[b]{0.37\columnwidth}
    \centering
    \scalebox{0.8}{
    \begin{tikzpicture}[DP]
        \node[dp={1}{1}] (f) {$d$};
        \node[dp={1}{1}, right=1cm of f] (g) {$e$};
        \draw[rconn, rcname={}, fcname={}] (f_res1) to (g_fun1);
        \draw[runconn, runame={}] (g_res1);
        \draw[funconn, funame={}] (f_fun1);
    \end{tikzpicture}
    }

    \smallskip

    {\scriptsize (b) Series.}
\end{minipage}
\hfill
\begin{minipage}[b]{0.23\columnwidth}
    \centering
    \scalebox{0.8}{
    \begin{tikzpicture}[DP]
        \node[dp={1}{1}] (f) {$d$};
        \node[dp={1}{1}, below=0.3cm of f] (g) {$e$};
        \draw[runconn, runame={}] (f_res1){};
        \draw[runconn, runame={}] (g_res1){};
        \draw[funconn, funame={}] (f_fun1){};
        \draw[funconn, funame={}] (g_fun1){};
    \end{tikzpicture}
    }

    \smallskip

    {\scriptsize (c) Parallel.}
\end{minipage}
\hfill
\begin{minipage}[b]{0.24\columnwidth}
    \centering
    \scalebox{0.8}{
    \begin{tikzpicture}[DP]
        \node[dp={2}{2}] (f) {$d$};
        \draw[runconn, runame={}] (f_res2){};
        \draw[funconn, funame={}] (f_fun2){};
        \draw[rconn,rcname={},fcname={},feedback=1,loos=3] (f_res1) -- ($(f)+(0,4)$) |- (f_fun1);
    \end{tikzpicture}
    }

    \smallskip

    {\scriptsize (d) Loop.}
\end{minipage}

\caption{\Glspl{acr:dp} can be composed in different ways.}
\label{fig:dp_composition}

\end{figure}

%% file: Sections/online_design.tex
\section{Online Design Problem}
\label{sec:online_design}

In this section, we describe an online sequential decision-making process for multi-objective design problems.

\subsection{Problem Setup}
\label{sec:problem_setup}

We consider an agent (i.e., a decision maker) that interacts with a \gls{acr:dpi}~$\dprb \in \dpiOf{\impSetI}{\funPosetF}{\resPosetR}$ in an online and adaptive manner.
The agent can query $\dprb$ and adjust its decisions based on the observed outcomes.
Because the problem involves \posets $\funPosetF$ and $\resPosetR$, the agent aims to identify a \emph{set} of non-dominated solutions to $\dprb$. 
We make the following assumptions about $\dprb$.
\begin{assumption}[Top and bottom elements]
\label{asm:top_bottom}
The \poset $\funPosetF$ has a top element $\top_\funPosetF$ and a bottom element $\bot_\funPosetF$. Similarly, the \poset $\resPosetR$ has a top element $\top_\resPosetR$ and a bottom element $\bot_\resPosetR$.
\end{assumption}
\begin{assumption}[Feasibility and upward generation by minima]
\label{asm:feasibility}
Given any $\F{f} \in \funPosetF$, there exists a nonempty subset $\impSetI_{\F{f}} \subseteq \impSetI$ such that $\F{f} \preceq_\funPosetF \provImp{i}$ for all $\I{i} \in \impSetI_{\F{f}}$.
Moreover, the set of minimal elements $\FixFunMinResOf{f}$ exists, and every target-feasible resource is above one of these minimal elements.
\end{assumption}
\Cref{asm:top_bottom} is not restrictive, in the sense that we can always augment the \poset $\funPosetF$ and $\resPosetR$ with artificial top and bottom elements. 
We will use the top and bottom elements as a convenient way to define default behavior for algorithms.
\Cref{asm:feasibility} implies that the agent does not need to reason about returning a certificate of infeasibility for any given target functionality $\F{f}$, and the object of interest is well-posed%
\footnote{For the existence of and upward generation by minima, it is sufficient that all \posets are \glspl{acr:dcpo} and $\dprb$ is Scott continuous~\cite{zardini2023co}.}.
Further, we make the following assumption about the implementation space~$\impSetI$.
\begin{assumption}[Sampling measure]
\label{asm:base_measure}
The implementation space $\impSetI$ is a Polish space (complete, separable, metrizable) equipped with a Borel probability measure $\mu$ of full support, i.e., 
$\mu(U) > 0$ for every non-empty open set $U \subseteq \impSetI$.
\end{assumption}
\Cref{asm:base_measure} guarantees that a naive agent drawing i.i.d.\ samples from $\mu$ will eventually 
explore every region of $\impSetI$: no open subset is invisible to the sampling process. 
This subsumes the two standard cases: finite $\impSetI$ with any strictly positive distribution, and a bounded subset of $\mathbb{R}^d$ with normalized Lebesgue measure.
However, this does not impose any additional structure on~$\impSetI$.
In these special cases, it is natural to take $\mu$ to be the uniform distribution (counting measure and Lebesgue measure, respectively).
Now, the online learning problem is as follows:

\begin{problem}[Online design]
\label{prb:online_design}
The agent is given a target functionality $\F{f} \in \funPosetF$ and is allowed to interact with a \gls{acr:dpi} $\dprb \in \dpiOf{\impSetI}{\funPosetF}{\resPosetR}$ by specifying $\I{i} \in \impSetI$ and observing $\provImp{i}$ and $\reqImp{i}$. The agent is given a total evaluation budget of $N$ for querying $\dprb$. The goal of the agent is to return a set $I_N \subseteq \impSetI$, $ \card{I_N} \leq N$, that induces a resource antichain that recovers the true solution $\FixFunMinResOf{f}$.
\end{problem}

We formalize the notion of induced antichains in the next subsection. 
Note that we do not require the agent to return every optimal implementation; if two implementations share the same nondominated resource value, returning either one is sufficient. 
Finally, a variant of \Cref{prb:online_design} can be formulated in terms of a time budget rather than a sample budget.

\subsection{Histories and Induced Antichains}
\label{sec:histories}

In this section, we define useful abstractions that would help us reason about algorithms for \Cref{prb:online_design}.
At any point in time $t \in [1, N]$, we define the history $H_t$ of the agent up to that point as:
\begin{align}
    H_t = \set{ \tupIII{\I{i}}{\provImp{i}}{\reqImp{i}} \mid \I{i} \in I_t },
\end{align}
where $I_t \subseteq \impSetI$ is \textit{all} implementations selected by the agent so far. 
Moreover, the set of all \textit{finite} valid histories $\hist$ is:
\begin{align}
    \hist = \mathsf{Finite}\big( \set{ \tupIII{\I{i}}{\provImp{i}}{\reqImp{i}} \mid \I{i} \in \impSetI } \big),
\end{align}
which is a proper subset of all finite triples of $\impSetI \times \funPosetF \times \resPosetR$. 
We define a projection operator $\Pi_\resPosetR \colon \hist \to \powOf{\resPosetR}$ that extracts the resources from a given history $H \in \hist$:
\begin{align}
    \Pi_\resPosetR(H) = \set{ \R{r} \mid \exists \I{i},\F{f} : \tupIII{\I{i}}{\F{f}}{\R{r}} \in H }.
\end{align}
We define $\Pi_\impSetI \colon \hist \to \powOf{\impSetI}$ and $\Pi_\funPosetF \colon \hist \to \powOf{\funPosetF}$ in the same way. 
Furthermore, we also define projection operators $\Pi_{\impSetI\times\funPosetF} \colon \hist \to \mathsf{Pow}(\impSetI \times \funPosetF)$, $\Pi_{\impSetI\times\resPosetR} \colon \hist \to \mathsf{Pow}(\impSetI \times \resPosetR)$, and $\Pi_{\funPosetF\times\resPosetR} \colon \hist \to \mathsf{Pow}(\funPosetF \times \resPosetR)$ that extract implementation-functionality pairs, implementation-resource pairs, and functionality-resource pairs from the history.

The history $H_t$ will induce an antichain representing the set of implementations that are non-dominated by any other implementation in $H_t$.
We define a map $\mathsf{Anti}_\mathcal{P} \colon \hist \to \mathsf{Pow}(\mathcal{P})$ that extracts an antichain in a \poset $\mathcal{P}$ from a history $H \in \hist$:
\begin{align}
    \antiOf{\mathcal{P}}(H) = \set{ x \in \Pi_{\mathcal{P}}(H) \mid \nexists y \in \Pi_{\mathcal{P}}(H) : y \prec_{\mathcal{P}} x },
\end{align}
where $y \prec_{\mathcal{P}} x$ denotes that $y$ dominates $x$ under the partial order $\preceq_{\mathcal{P}}$. 
We extract the antichain of resources through $\mathsf{Anti}_\resPosetR$ and the antichain of functionalities through $\mathsf{Anti}_{\funPosetF^\text{op}}$. 
We now define a map that extracts an antichain in $\resPosetR$ that satisfies the target functionality $\F{f}$ from the history. 
%
\begin{definition}[History-induced antichain]
Given a history $H \in \hist$, the target-feasible induced resource antichain $\antiOf{\F{f}} \colon \hist \to \powOf{\resPosetR}$ is 
\begin{align}
    \antiOf{\F{f}}(H) = \mathsf{Anti}_\resPosetR(\set{ \tupIII{\I{i}}{\F{f'}}{\R{r}} \in H \mid \F{f} \preceq_\funPosetF \F{f'} }).
\end{align}
\end{definition}

\subsection{Objective}

After $N$ queries, the agent returns a set $I_N$ that induces the approximate antichain $\SubAnti = \antiOf{\F{f}}(H_N)$. Recall that $\OptAnti \coloneq \FixFunMinResOf{f}$. Exact recovery occurs when $\SubAnti = \OptAnti$. 
Note that, in the general partially ordered setting, the requirement that every point on the true antichain be covered within a specified tolerance would require extra structure on~$\resPosetR$.
For instance, in the Euclidean case $\resPosetR = \mathbb{R}^m$, $\HVD$ can be used as a quantitative measure of convergence (i.e., the agent aims to drive $\mathsf{HVD} \to 0$ using as few queries $ N$).

%% file: Sections/optimistic_elimination.tex
\section{Optimistic Evaluators and Elimination-Based Algorithm}
In this section, we develop a generic elimination-based algorithm for the online design problem described in \Cref{prb:online_design}. 
The main idea is to equip the agent with history-dependent bounds on the unknown maps~$\req$ and~$\prov$.
These bounds make it possible to certify that certain unqueried implementations cannot contribute to the target-feasible nondominated set, and may therefore be discarded without evaluation.
We first introduce optimistic and pessimistic evaluators, then present a rejection-sampling algorithm based on optimistic elimination, and finally describe several structural settings in which such evaluators can be constructed explicitly.

\subsection{Optimistic Evaluators}
\label{sec:optimistic_eval}

We begin by formalizing lower and upper bounds on the resource and functionality maps.

\begin{definition}[Optimistic evaluators]
\label{def:opt_evaluators}
Maps $\provOpt \colon \impSetI \times \hist \to \funPosetF$ and $\reqOpt \colon \impSetI \times \hist \to \resPosetR$ are called \emph{optimistic evaluators} if, for every implementation~$\I{i} \in \impSetI$ and every history~$H \in \hist$,
\begin{align}
    \reqOpt(\I{i}, H) &\preceq_\resPosetR \reqImp{i}, 
    \label{eq:optimistic_req}\\
    \provImp{i} &\preceq_\funPosetF \provOpt(\I{i}, H), 
    \label{eq:optimistic_prov}
\end{align}
\end{definition}

Thus, $\reqOpt$ is a valid lower bound on the required resources~$\req$ and $\provOpt$ is a valid upper bound on the provided functionality $\prov$ under any history. 
The term \emph{optimistic} reflects the fact that resources are to be minimized and functionalities are to be maximized, so that~$\reqOpt(\I{i},H)$ and~$\provOpt(\I{i},H)$ represent the most favorable assessment of implementation $\I{i}$ consistent with the information available in the history $H$.
Note that because of \Cref{asm:top_bottom}, optimistic evaluators always exist in a trivial sense:
\begin{equation*}
\reqOpt(\I{i},H) \equiv \bot_\resPosetR,
\qquad
\provOpt(\I{i},H) \equiv \top_\funPosetF.
\end{equation*}
Such bounds are valid but uninformative.
The purpose of the history dependence is precisely to allow the bounds to tighten as information accumulates.
Typically, one expects~$\reqOpt(\I{i},H)$ to increase in the resource order and~$\provOpt(\I{i},H)$ to decrease in the functionality order as~$H$ grows, thereby yielding progressively sharper elimination rules.
We will present precise instances of $\reqOpt$ and $\provOpt$ in \Cref{sec:optimistic_evaluator_example}.

We now explain how optimistic evaluators can be used to discard implementations without querying them.
Fix a target functionality~$\F{f} \in \funPosetF$, a history~$H \in \hist$, and an implementation~$\I{i} \in \impSetI \setminus \Pi_\impSetI(H)$ (i.e., an implementation that the agent did not query yet). 
Suppose that either of the following conditions holds:
\begin{align}
    \reqOpt(\I{i}, H) &\in \upper \antiOf{\F{f}}(H),
    \label{eq:optimistic_req_eliminate}\\
    \provOpt(\I{i}, H) &\prec_\funPosetF \F{f}.
    \label{eq:optimistic_prov_eliminate}
\end{align}
Condition \eqref{eq:optimistic_req_eliminate} states that even the most favorable resource estimate for~$\I{i}$ is already dominated by the current target-feasible antichain, whereas \eqref{eq:optimistic_prov_eliminate} states that even the most favorable functionality estimate fails to satisfy the target requirement.
In either case, $\I{i}$ cannot contribute a new element to the solution of the query associated with $\F{f}$.
Indeed, by \eqref{eq:optimistic_req} and \eqref{eq:optimistic_prov}, these conditions imply
\begin{equation*}
\reqImp{i} \in \upper \antiOf{\F{f}}(H)
\qquad\text{or}\qquad
\F{f} \npreceq_\funPosetF \provImp{i},
\end{equation*}
respectively.
Hence, $\I{i}$ may be safely discarded without explicitly observing either $\reqImp{i}$ or $\provImp{i}$. 
This is the basic elimination principle underlying the algorithm below. 
A graphical illustration is shown in \Cref{fig:elimination_example}.

\input{Figures/elimination_mixed_example}

For completeness, we also introduce the dual notion of pessimistic evaluation.
\begin{definition}[Pessimistic evaluators]
\label{def:pes_evaluators}
Maps $\provPes \colon \impSetI \times \hist \to \funPosetF$ and $\reqPes \colon \impSetI \times \hist \to \resPosetR$ are called \emph{pessimistic evaluators} if, for every implementation~$\I{i} \in \impSetI$ and every history~$H \in \hist$,
\begin{align}
    \reqImp{i} &\preceq_\resPosetR \reqPes(\I{i}, H),
    \label{eq:pessimistic_req}\\
    \provPes(\I{i}, H) &\preceq_\funPosetF \provImp{i}.
    \label{eq:pessimistic_prov}
\end{align}
\end{definition}
In contrast to optimistic evaluators, pessimistic evaluators provide certified worst-case guarantees. 
In particular, if for some implementation $\I{i}$ one has
\begin{equation*}
    \reqPes(\I{i},H) \notin \upper \antiOf{\F{f}}(H)
\qquad\text{and}\qquad
\F{f} \preceq_\funPosetF \provPes(\I{i},H),
\end{equation*}
then $\I{i}$ is guaranteed to satisfy the target and to improve the current feasible antichain. 
Although the baseline algorithm below only requires optimistic evaluators, pessimistic evaluators are useful for designing more aggressive acquisition and stopping rules.

\subsection{Rejection Sampling with Optimistic Evaluations}
\label{sec:rejection_sampling}

We now describe a simple elimination-based procedure that uses optimistic evaluators to focus sampling on implementations that may still improve the current solution estimate.

\begin{definition}[Admissible set]
\label{def:admissible_set}
Fix a target functionality $\F{f} \in \funPosetF$.
For a history $H \in \hist$, the \emph{admissible set} of implementations $\mathcal{A}(H) \subseteq \impSetI$ is
\begin{align*}
\mathcal{A}(H)
\coloneq
\Big\{
\I{i} \in \impSetI \setminus \Pi_\impSetI(H)
\ \Big|\
&\reqOpt(\I{i},H) \notin \upper \antiOf{\F{f}}(H),\\
&\F{f} \preceq_\funPosetF \provOpt(\I{i},H)
\Big\}. \numberthis
\label{eq:admissible_set}
\end{align*}
\end{definition}

Thus, $\mathcal{A}(H)$ consists precisely of those unqueried implementations that cannot yet be ruled out as irrelevant to the target query. 
Equivalently, it is the set of implementations that remain plausible candidates for improving the current target-feasible antichain.

\begin{algorithm}[tb]
\caption{Rejection Sampler with Optimistic Evaluators}
\label{alg:elimination_sampler}
\begin{algorithmic}[1]
    \State \textbf{input:} Target functionality $\F{f}$ and evaluation budget $N$
    \State \textbf{initialize:} History $H= \{\}$
    \For{$t = 1, \dots, N$}
        \State Set $\mathsf{Accept} = \bot$
        \While{$\neg \mathsf{Accept}$}
            \State Sample $\I{i} \sim \mu(\impSetI \setminus \Pi_\impSetI(H))$
            \State Evaluate $\R{r}^\opt = \reqOpt(\I{i}, H)$ and $\F{f}^\opt = \provOpt(\I{i}, H)$
            \State Set $\mathsf{Accept} = \top$ if $( \R{r}^\opt \notin \upper \antiOf{\F{f}}(H) ) \land ( \F{f} \preceq_\funPosetF \F{f}^\opt )$
        \EndWhile

        \State \textbf{query expensive block:}
        \State Query $\R{r} = \reqImp{i}$ and $\F{f'} = \provImp{i}$ from $\dprb$
        \State Append $\tupIII{\I{i}}{\F{f'}}{\R{r}}$ to the history $H$
    \EndFor
    \State \textbf{return:} Antichain $\mathsf{Anti}_{\F{f}}(H)$ and its corresponding implementations
\end{algorithmic}
\end{algorithm}

\Cref{alg:elimination_sampler} may be viewed as a rejection sampler over the current admissible set. 
At each round, the algorithm draws implementations from the base measure $\mu$ until it finds one that is not eliminated by the optimistic tests, and only then queries the true values of $\req$ and $\prov$. 
In this way, the algorithm concentrates its budget on the subset of implementations that remain potentially relevant under the information encoded in the history.
To ensure the rejection-sampling loop is well defined up to budget $N$, we assume that the admissible set is always nonempty and has positive $\mu$-measure along histories of interest.

\begin{assumption}[Well-defined rejection sampling]
\label{asm:well_defined_rejection}
For every history $H \in \hist$, the admissible set $\mathcal{A}(H)$ is $\mu$-measurable with $\mu(\mathcal{A}(H)) > 0$.
\end{assumption}

Now, we list formal guarantees on our algorithm. 
The next lemma formalizes the soundness of the elimination rule. 
\begin{lemma}[Safe elimination by optimistic evaluators]
\label{lem:safe_elimination}
Let $H \in \hist$ and $\I{i} \in \impSetI$. 
Assume either $\reqOpt(\I{i},H) \in \upper \antiOf{\F{f}}(H)$ or $\F{f} \npreceq_{\funPosetF} \provOpt(\I{i},H)$. Then:
\begin{align}
    \neg\Big(
    \F{f} \preceq_{\funPosetF} \provImp{i}
    \ \land\
    \reqImp{i} \notin \upper \antiOf{\F{f}}(H)
    \Big).
\end{align}
Equivalently, $\I{i}$ cannot be both target-feasible and an improvement over the current feasible antichain.
\end{lemma}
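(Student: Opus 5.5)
The plan is a direct case split on which of the two hypotheses holds. In each case the optimistic inequalities of \Cref{def:opt_evaluators} combine with an order-theoretic closure property to falsify one conjunct of the target statement. Since the conclusion is the negation of a conjunction, it suffices to show that the first hypothesis forces $\reqImp{i} \in \upper \antiOf{\F{f}}(H)$, and the second forces $\F{f} \npreceq_\funPosetF \provImp{i}$.

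\emph{Case 1:} $\reqOpt(\I{i},H) \in \upper \antiOf{\F{f}}(H)$. By definition of upper closure there is some $\R{a} \in \antiOf{\F{f}}(H)$ with $\R{a} \preceq_\resPosetR \reqOpt(\I{i},H)$. By \eqref{eq:optimistic_req}, $\reqOpt(\I{i},H) \preceq_\resPosetR \reqImp{i}$. Transitivity of $\preceq_\resPosetR$ then gives $\R{a} \preceq_\resPosetR \reqImp{i}$, so $\reqImp{i} \in \upper \antiOf{\F{f}}(H)$. The only fact used is that upper sets are closed upward. This falsifies the second conjunct.

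\emph{Case 2:} $\F{f} \npreceq_\funPosetF \provOpt(\I{i},H)$. Argue by contraposition: suppose $\F{f} \preceq_\funPosetF \provImp{i}$. By \eqref{eq:optimistic_prov}, $\provImp{i} \preceq_\funPosetF \provOpt(\I{i},H)$, so transitivity yields $\F{f} \preceq_\funPosetF \provOpt(\I{i},H)$, which contradicts the hypothesis. Hence $\F{f} \npreceq_\funPosetF \provImp{i}$, falsifying the first conjunct.

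There is no real obstacle; the argument is two applications of transitivity. The one point needing care is that the lemma's hypothesis is stated as $\F{f} \npreceq_\funPosetF \provOpt$, which is weaker than the strict condition \eqref{eq:optimistic_prov_eliminate} used earlier in the text, so the proof should rely only on non-comparability, as above. I would also remark that the argument holds for every $\I{i} \in \impSetI$, queried or not, and that no assumption beyond \Cref{def:opt_evaluators} is used; in particular neither \Cref{asm:feasibility} nor measurability is needed.
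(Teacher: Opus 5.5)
Your proof is correct and follows the paper's argument essentially step for step. It uses the same case split: Case 1 takes an upper-closure witness and combines it with \eqref{eq:optimistic_req} via transitivity, and Case 2 uses \eqref{eq:optimistic_prov} and transitivity to contradict $\F{f} \npreceq_\funPosetF \provOpt(\I{i},H)$.
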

\begin{proof}
We prove each case separately.

\emph{Case 1}: Suppose that~$\reqOpt(\I{i},H) \in\upper \antiOf{\F{f}}(H)$.
By definition of upper closure, there exists some~$\R{r}_H \in \antiOf{\F{f}}(H)$ such that~$\R{r}_H \preceq_\resPosetR \reqOpt(\I{i},H)$. 
By optimism \eqref{eq:optimistic_req},~$\reqOpt(\I{i},H) \preceq_\resPosetR \reqImp{i}$. 
By transitivity,~$\R{r}_H \preceq_\resPosetR \reqImp{i}$, hence $\reqImp{i} \in\upper \antiOf{\F{f}}(H)$.

\emph{Case 2}: Suppose that~$\F{f} \npreceq_{\funPosetF} \provOpt(\I{i},H)$.
By optimism \eqref{eq:optimistic_prov}, $\provImp{i} \preceq_\funPosetF \provOpt(\I{i},H)$. If $\F{f} \preceq_\funPosetF \provImp{i}$, then by transitivity $\F{f} \preceq_\funPosetF \provOpt(\I{i},H)$, a contradiction. Hence $\F{f} \npreceq_{\funPosetF} \provImp{i}$.
\end{proof}

The next result shows that, under a natural monotonicity condition, the admissible set can only shrink as more information is collected.

\begin{lemma}[Monotone shrinking of the admissible set]
\label{lem:monotone_shrinking}
Let~$\I{i} \in \impSetI$ and~$H, H' \in \hist$ such that~$H \subseteq H'$. 
Assume that:
\begin{align*}
    \reqOpt(\I{i}, H) &\preceq_\resPosetR \reqOpt(\I{i}, H'), \\
    \provOpt(\I{i}, H') &\preceq_\funPosetF \provOpt(\I{i}, H). \numberthis
    \label{eq:monotone_in_history}
\end{align*}
Then $\mathcal{A}(H') \subseteq \mathcal{A}(H)$.
\end{lemma}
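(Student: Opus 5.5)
The plan is to take an arbitrary $\I{i} \in \mathcal{A}(H')$ and check that it satisfies each of the three defining conditions of $\mathcal{A}(H)$ in \eqref{eq:admissible_set}. I read the hypothesis \eqref{eq:monotone_in_history} as holding for every implementation, or at least for the $\I{i}$ under consideration, which is all the argument uses.

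\emph{Step 1 (unqueried).} Since $H \subseteq H'$, we have $\Pi_\impSetI(H) \subseteq \Pi_\impSetI(H')$. Hence $\I{i} \notin \Pi_\impSetI(H')$ gives $\I{i} \notin \Pi_\impSetI(H)$.

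\emph{Step 2 (functionality test).} From $\I{i} \in \mathcal{A}(H')$ we have $\F{f} \preceq_\funPosetF \provOpt(\I{i},H')$. The hypothesis gives $\provOpt(\I{i},H') \preceq_\funPosetF \provOpt(\I{i},H)$, so transitivity yields $\F{f} \preceq_\funPosetF \provOpt(\I{i},H)$.

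\emph{Step 3 (resource test).} I argue by contraposition. Suppose $\reqOpt(\I{i},H) \in \upper \antiOf{\F{f}}(H)$. Since $\reqOpt(\I{i},H) \preceq_\resPosetR \reqOpt(\I{i},H')$ and upper closures are upward closed, $\reqOpt(\I{i},H') \in \upper \antiOf{\F{f}}(H)$. To contradict $\I{i} \in \mathcal{A}(H')$, it then suffices to prove the inclusion
\[
\upper \antiOf{\F{f}}(H) \subseteq \upper \antiOf{\F{f}}(H').
\]

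This inclusion is the only non-formal step and the one I expect to be the main obstacle, since $\antiOf{\F{f}}$ itself is not monotone in $H$: elements of $\antiOf{\F{f}}(H)$ may become dominated in $H'$. I would argue as follows. Let $\R{r} \in \antiOf{\F{f}}(H)$. Then $\R{r}$ is the resource of some triple $\tupIII{\I{j}}{\F{f'}}{\R{r}} \in H \subseteq H'$ with $\F{f} \preceq_\funPosetF \F{f'}$. So $\R{r}$ lies in the set $S'$ of target-feasible resources recorded in $H'$. Because $H'$ is a \emph{finite} history, $S'$ is a finite subset of $\resPosetR$. In a finite poset every element lies above some minimal element: follow any strictly descending chain below $\R{r}$; it must terminate, by finiteness and antisymmetry. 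Hence there is $\R{r}' \in \antiOf{\F{f}}(H')$ with $\R{r}' \preceq_\resPosetR \R{r}$. Thus $\R{r} \in \upper\antiOf{\F{f}}(H')$, and since the right-hand side is upward closed, the inclusion follows.

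Combining the three steps gives $\I{i} \in \mathcal{A}(H)$, and therefore $\mathcal{A}(H') \subseteq \mathcal{A}(H)$.
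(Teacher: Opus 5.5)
Your proof is correct and follows the paper's argument. The resource test uses the same contrapositive, based on the upward closedness of $\upper \antiOf{\F{f}}(H)$ and the inclusion $\upper \antiOf{\F{f}}(H) \subseteq \upper \antiOf{\F{f}}(H')$, and the functionality test uses transitivity; you are also slightly more complete than the paper, which omits the check $\I{i} \notin \Pi_\impSetI(H)$ and asserts the upper-closure inclusion without proof, whereas you justify it correctly via the finiteness of histories.
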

\begin{proof}
Since $H \subseteq H'$, we have $\upper \antiOf{\F{f}}(H) \subseteq \upper \antiOf{\F{f}}(H')$. Let $i \in \mathcal{A}(H')$. Then $\reqOpt(\I{i}, H') \notin \upper \antiOf{\F{f}}(H')$, which implies that $\reqOpt(\I{i}, H') \notin \upper \antiOf{\F{f}}(H)$. Now, by hypothesis \eqref{eq:monotone_in_history}, $\reqOpt(\I{i}, H) \preceq_\resPosetR \reqOpt(\I{i}, H')$. If $\reqOpt(\I{i}, H) \in \upper \antiOf{\F{f}}(H)$, then since the upper closure is an upper set and $\reqOpt(\I{i}, H) \preceq_\resPosetR \reqOpt(\I{i}, H')$, we would have $\reqOpt(\I{i}, H') \in \upper \antiOf{\F{f}}(H)$, contradicting what we just established. Hence $\reqOpt(\I{i}, H) \notin \upper \antiOf{\F{f}}(H)$.

Similarly, $\F{f} \preceq_\funPosetF \provOpt(\I{i}, H')$ and $\provOpt(\I{i}, H') \preceq_\funPosetF \provOpt(\I{i}, H)$ yield $\F{f} \preceq_\funPosetF \provOpt(\I{i}, H)$ by transitivity. Therefore $i \in \mathcal{A}(H)$.
\end{proof}

Together, \Cref{lem:safe_elimination} and \Cref{lem:monotone_shrinking} establish the two central properties of \Cref{alg:elimination_sampler}: \emph{soundness} (no implementation that could still improve the current target-feasible frontier is ever eliminated) and \emph{monotone tightening} (the set of candidate implementations can only shrink as data accumulates).
Now, we use these properties to establish antichain recovery results for \Cref{alg:elimination_sampler}.

\begin{theorem}[Trichotomy for optimal witnesses]
\label{thm:optimal_witnesses}
Consider \Cref{alg:elimination_sampler} without a sample budget under
\Cref{asm:top_bottom,asm:feasibility,asm:base_measure,asm:well_defined_rejection},
with valid optimistic evaluators (\Cref{def:opt_evaluators}).
Let $H_t$ denote the history after round~$t$.
Then for every $\R{r}^\star \in \OptAnti$ and every witness $\I{i}^\star$ satisfying
$\F{f} \preceq_\funPosetF \provImpStar{i}$ and $\reqImpStar{i} = \R{r}^\star$,
almost surely exactly one of the following occurs:
\begin{enumerate}[label=(\alph*)]
\item \emph{(Queried.)} $\I{i}^\star$ is queried at some round~$t$, in which case $\R{r}^\star \in \upper \antiOf{\F{f}}(H_t)$.
\item \emph{(Eliminated.)} $\I{i}^\star$ is never queried but is eliminated from $\mathcal{A}(H_t)$ at some round~$t$, in which case $\R{r}^\star \in \upper \antiOf{\F{f}}(H_t)$.
\item \emph{(Persistently admissible.)} $\I{i}^\star$ is never queried and remains admissible at every round, i.e., $\I{i}^\star \in \mathcal{A}(H_t)$ for all $t \geq 1$.
\end{enumerate}
In Cases~(a) and~(b), $\R{r}^\star$ is covered by the approximate antichain.
\end{theorem}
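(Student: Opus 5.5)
The plan is a case analysis on the (random) trajectory of \Cref{alg:elimination_sampler}, run without a budget. Fix $\R{r}^\star \in \OptAnti$ and a witness $\I{i}^\star$ with $\F{f} \preceq_\funPosetF \provImpStar{i}$ and $\reqImpStar{i} = \R{r}^\star$. Under \Cref{asm:well_defined_rejection}, each inner rejection loop terminates almost surely, because every draw lands in $\mathcal{A}(H)$ with probability $\mu(\mathcal{A}(H))/\mu(\impSetI\setminus\Pi_\impSetI(H))>0$. Taking a countable intersection over rounds, almost surely the algorithm produces an infinite sequence of histories $H_1 \subseteq H_2 \subseteq \cdots$. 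On this full-measure event I will classify the trajectory according to two properties: whether $\I{i}^\star \in \Pi_\impSetI(H_t)$ for some $t$, and whether $\I{i}^\star \notin \mathcal{A}(H_t)$ for some $t$.

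The three cases are then handled as follows.
\begin{enumerate}[label=(\alph*)]
\item If $\I{i}^\star$ is queried at round $t$, the triple $\tupIII{\I{i}^\star}{\provImpStar{i}}{\R{r}^\star}$ is appended to $H_t$. Since $\F{f} \preceq_\funPosetF \provImpStar{i}$, the element $\R{r}^\star$ lies in the target-feasible projection of $H_t$. To conclude $\R{r}^\star \in \upper\antiOf{\F{f}}(H_t)$ I need that projection to be generated by its minimal elements. For finite histories this is immediate: either $\R{r}^\star$ is minimal, or some strictly smaller feasible resource lies below it. That smaller resource would contradict $\R{r}^\star \in \OptAnti = \FixFunMinResOf{f}$, so in fact $\R{r}^\star \in \antiOf{\F{f}}(H_t)$ itself.
\item If $\I{i}^\star$ is never queried but $\I{i}^\star \notin \mathcal{A}(H_t)$ for some $t$, then, since it is unqueried, \Cref{def:admissible_set} says one of the elimination conditions \eqref{eq:optimistic_req_eliminate} or \eqref{eq:optimistic_prov_eliminate} holds at $H_t$. \Cref{lem:safe_elimination} then gives $\neg(\F{f}\preceq_\funPosetF \provImpStar{i} \land \R{r}^\star \notin \upper\antiOf{\F{f}}(H_t))$. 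The first conjunct holds by the choice of witness, so $\R{r}^\star \in \upper\antiOf{\F{f}}(H_t)$.
\item Otherwise $\I{i}^\star$ is never queried and never inadmissible, which is exactly the persistent-admissibility case.
\end{enumerate}

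Mutual exclusivity needs care with the time index. (a) and (b) are exclusive by the ``queried / never queried'' split. (b) and (c) are exclusive by the ``eliminated at some $t$ / admissible for all $t$'' split. (a) and (c) are exclusive because once $\I{i}^\star$ is queried it lies in $\Pi_\impSetI(H_t)$, hence outside $\mathcal{A}(H_t)$, which contradicts (c). Exhaustiveness holds because the two properties give three logical possibilities once the combination ``queried'' is merged with its sub-possibilities about admissibility. I will phrase (b) as ``never queried and, for some $t$, not admissible'' to match this split. I will also remark that \Cref{lem:monotone_shrinking} is not needed for the trichotomy itself. It only ensures that after elimination $\I{i}^\star$ stays out. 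Coverage persists anyway because $\upper\antiOf{\F{f}}(H_t)$ is monotone in $t$, as shown in the proof of \Cref{lem:monotone_shrinking}.

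The main obstacle is the ``almost surely'' qualifier, that is, making precise that the infinite run exists. I would first try to justify a.s.\ termination of each inner while-loop by a geometric-trials argument using \Cref{asm:well_defined_rejection}. The step that $\R{r}^\star$ is actually in the antichain in case (a) is routine given minimality in $\OptAnti$.
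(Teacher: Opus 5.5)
Your proposal is correct and follows essentially the same route as the paper: the same split into queried, never-queried-but-eliminated, and persistently admissible, with Case~(a) obtained from target feasibility of the witness and Case~(b) from \Cref{lem:safe_elimination}. Two of your additions are refinements the paper leaves implicit: the a.s.\ termination argument for the rejection loops, and the observation that $\R{r}^\star \in \antiOf{\F{f}}(H_t)$ itself, by minimality in $\OptAnti$. One small nit: non-admissibility of an unqueried $\I{i}^\star$ gives $\F{f} \npreceq_\funPosetF \provOpt(\I{i}^\star,H_t)$ rather than the strict condition \eqref{eq:optimistic_prov_eliminate}. This is harmless, since \Cref{lem:safe_elimination} is stated with exactly this weaker hypothesis.
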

\begin{proof}
Fix $\R{r}^\star \in \OptAnti$ and a witness $\I{i}^\star$.
The three cases are mutually exclusive and exhaustive.

\emph{Case (a):}
Suppose $\I{i}^\star \in \Pi_\impSetI(H_t)$ at some round~$t$.
Since $\I{i}^\star$ is target-feasible ($\F{f} \preceq_\funPosetF \provImpStar{i}$), its resource $\R{r}^\star = \reqImpStar{i}$ enters $\antiOf{\F{f}}(H_t)$, giving $\R{r}^\star \in \upper \antiOf{\F{f}}(H_t)$.

\emph{Case (b):}
Suppose $\I{i}^\star \notin \Pi_\impSetI(H_t)$ for all $t$ (never queried), but $\I{i}^\star \notin \mathcal{A}(H_t)$ at some round~$t$ (eliminated).
By \Cref{lem:safe_elimination}, $\I{i}^\star$ cannot be both target-feasible and an improvement over the current antichain.
Since $\I{i}^\star$ is target-feasible by assumption, it follows that $\reqImpStar{i} \in \upper \antiOf{\F{f}}(H_t)$, i.e., $\R{r}^\star \in \upper \antiOf{\F{f}}(H_t)$ through another witness.

\emph{Case (c):}
Suppose $\I{i}^\star \notin \Pi_\impSetI(H_t)$ and $\I{i}^\star \in \mathcal{A}(H_t)$ for all $t$.
This is precisely the persistent-admissibility alternative.
\end{proof}

\begin{theorem}[Exact recovery from positive-measure witness sets]
\label{thm:positive_measure_convergence}
Under the assumptions of \Cref{thm:optimal_witnesses}, additionally assume that $\OptAnti$ is finite and that for every $\R{r}^\star \in \OptAnti$, the set
\begin{align}
    W(\R{r}^\star)
    \coloneq
    \set{
    \I{i} \in \impSetI :
    \F{f} \preceq_\funPosetF \provImp{i},
    \ \reqImp{i} = \R{r}^\star
    }
\end{align}
is Borel-measurable and satisfies $\mu(W(\R{r}^\star)) > 0$.
Then there exists a random $N_0 < \infty$ such that
\begin{align}
    \upper \antiOf{\F{f}}(H_t) = \upper \OptAnti,
    \qquad \forall t \geq N_0,
    \quad \text{almost surely.}
\end{align}
\end{theorem}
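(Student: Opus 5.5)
We will prove the two inclusions separately and then upgrade a per-element statement to a uniform one using finiteness of $\OptAnti$. The easy inclusion is $\upper \antiOf{\F{f}}(H_t) \subseteq \upper \OptAnti$ for every $t$, deterministically. Every element of $\antiOf{\F{f}}(H_t)$ is $\reqImp{j}$ for some queried $\I{j}$ with $\F{f} \preceq_\funPosetF \provImp{j}$, so it is a target-feasible resource. By \Cref{asm:feasibility} it lies above some element of $\FixFunMinResOf{f} = \OptAnti$. Hence the whole upper closure lies in $\upper \OptAnti$.

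For the reverse inclusion it suffices to show $\OptAnti \subseteq \upper \antiOf{\F{f}}(H_t)$ for all large $t$. First we record persistence. The set $\upper \antiOf{\F{f}}(H)$ is nondecreasing in $H$ (as used in the proof of \Cref{lem:monotone_shrinking}), and histories only grow along the run. So once $\R{r}^\star \in \upper \antiOf{\F{f}}(H_t)$, it remains there forever. It therefore suffices, for each fixed $\R{r}^\star \in \OptAnti$, to produce an almost surely finite random time $T(\R{r}^\star)$ after which $\R{r}^\star$ is covered. We then set $N_0 \coloneq \max_{\R{r}^\star \in \OptAnti} T(\R{r}^\star)$, which is finite almost surely because $\OptAnti$ is finite.

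To get $T(\R{r}^\star)$, consider the event $E$ that $\R{r}^\star$ is never covered. On $E$, no element of $W(\R{r}^\star)$ is ever queried, since a queried witness enters the target-feasible history and covers $\R{r}^\star$ (Case (a) of \Cref{thm:optimal_witnesses}). Also on $E$, no witness is ever eliminated, by Case (b) and \Cref{lem:safe_elimination}. So every witness stays in $\mathcal{A}(H_t)$ for all $t$, and on $E$ we have $W(\R{r}^\star) \subseteq \mathcal{A}(H_t)$ for every $t$.

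Now look at the accepted sample in round $t+1$. The rejection loop draws from $\mu$ restricted to unqueried points and accepts the first admissible draw. The accepted point is therefore distributed as $\mu(\cdot \mid \mathcal{A}(H_t))$; queried points are excluded by the definition of $\mathcal{A}$, and the measure-theoretic subtlety of removing finitely many queried points is handled by \Cref{asm:well_defined_rejection}. Conditionally on the past and on $E$ up to time $t$, the round-$(t+1)$ query lands in $W(\R{r}^\star)$ with probability
\begin{align*}
\frac{\mu(W(\R{r}^\star)\cap \mathcal{A}(H_t))}{\mu(\mathcal{A}(H_t))} = \frac{\mu(W(\R{r}^\star))}{\mu(\mathcal{A}(H_t))} \geq \mu(W(\R{r}^\star)) \eqqcolon p > 0,
\end{align*}
using $\mu(\mathcal{A}(H_t)) \leq 1$. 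If $W(\R{r}^\star)$ has atoms that are already queried, $\R{r}^\star$ is already covered, so this case does not arise on $E$.

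Iterating this conditional bound gives $\Pr(E \text{ holds through round } t) \leq (1-p)^t \to 0$, so $\Pr(E)=0$ and $T(\R{r}^\star)<\infty$ almost surely. A conditional Borel--Cantelli or geometric-trial argument makes this rigorous.

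The main obstacle is the conditional-probability step. We must state precisely that the accepted draw has law $\mu(\cdot\mid\mathcal{A}(H_t))$ given the history, and justify the bound $\geq p$ uniformly in $t$ on $E$. I would formalize this with the filtration generated by the histories and a product of conditional probabilities, relying on \Cref{asm:well_defined_rejection} for measurability and almost-sure termination of each inner loop.
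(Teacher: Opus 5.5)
Your proposal is correct and follows the paper's proof essentially step for step. The shared steps are the deterministic inclusion $\upper \antiOf{\F{f}}(H_t) \subseteq \upper \OptAnti$ via upward generation by minima, the fact that on the never-covered event every witness stays admissible (Cases (a)/(b) with \Cref{lem:safe_elimination}), the uniform lower bound $\mu(W(\R{r}^\star))$ on the conditional probability of hitting a witness, and persistence plus finiteness of $\OptAnti$ to take $N_0$ as a maximum; the only cosmetic difference is that you close with the geometric bound $(1-p)^t$ on the time-$t$ non-coverage events rather than L\'evy's conditional Borel--Cantelli lemma, and both are valid.
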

\begin{proof}
Fix $\R{r}^\star \in \OptAnti$ and let $W^\star \coloneq W(\R{r}^\star)$.
We show that $\R{r}^\star$ is covered in finite time almost surely.

If there exists $\I{i}^\star \in W^\star$ falling under Case~(a) or~(b) of
\Cref{thm:optimal_witnesses}, then there exists a finite round~$t$ such that $\R{r}^\star \in \upper \antiOf{\F{f}}(H_t)$.
It remains to rule out the event that $\R{r}^\star$ is never covered.

Suppose $\R{r}^\star \notin \upper \antiOf{\F{f}}(H_t)$ for all $t$.
Then no witness $\I{i}^\star \in W^\star$ can fall under Case~(a) or~(b) of \Cref{thm:optimal_witnesses}, since either outcome would cover $\R{r}^\star$.
Hence every $\I{i}^\star \in W^\star$ falls under Case~(c), so $W^\star \subseteq \mathcal{A}(H_t)$ for all $t$.

At every round $t$, the conditional probability of sampling some witness in $W^\star$ is
\begin{align}
\label{eq:prob_optimal_witness}
    \Pr(\I{i}_t \in W^\star \mid H_{t-1})
    =
    \frac{\mu(W^\star)}{\mu(\mathcal{A}(H_{t-1}))}
    \geq
    \mu(W^\star)
    > 0,
\end{align}
where we used $W^\star \subseteq \mathcal{A}(H_{t-1}) \subseteq \impSetI$ and $\mu(\impSetI)=1$ (\Cref{asm:base_measure}).
On the event that $\R{r}^\star$ is never covered, this bound holds at every round, so $\sum_{t \geq 1} \Pr(\I{i}_t \in W^\star \mid H_{t-1}) = \infty$.
By the conditional Borel--Cantelli lemma (L\'evy), $\I{i}_t \in W^\star$ for some finite time almost surely, contradicting the assumption for Case~(c) that $\R{r}^\star$ is never covered.

Therefore, for each $\R{r}^\star \in \OptAnti$, only Cases~(a) and~(b) occur, so $\R{r}^\star \in \upper \antiOf{\F{f}}(H_t)$ at some finite random time.
Since the queried set grows monotonically with~$t$, the sequence $\{\upper \antiOf{\F{f}}(H_t)\}_{t \geq 1}$ is nondecreasing, so there exists a finite random time $T(\R{r}^\star)$ such that $\R{r}^\star \in \upper \antiOf{\F{f}}(H_t)$, $\forall t \geq T(\R{r}^\star)$, almost surely.
Since $\OptAnti$ is finite, the random variable $N_0 \coloneq \max_{\R{r}^\star \in \OptAnti} T(\R{r}^\star)$ is almost surely finite, giving $\upper \OptAnti \subseteq \upper \antiOf{\F{f}}(H_N)$, $\forall\, N \geq N_0$, almost surely.
The reverse inclusion holds for all $N \geq 1$: every target-feasible queried implementation has a resource vector in $\upper \OptAnti$, so $\upper \antiOf{\F{f}}(H_N) \subseteq \upper \OptAnti$.
Combining the two inclusions proves the claim.
\end{proof}

\begin{remark}[Role of the optimistic evaluators]
\label{rem:convergence_rate}
Beyond correctness (i.e., case~(b) of~\Cref{thm:optimal_witnesses}), the value of non-trivial optimistic evaluators shows in~\eqref{eq:prob_optimal_witness}, where the probability of sampling an optimal witness $\I{i}^\star$ increases as the set $\mathcal{A}(H_{t-1})$ shrinks. While tightening is guaranteed by~\Cref{lem:monotone_shrinking}, the statement is not quantitative, and quantitative statements would require extra assumptions on the partial orders $\funPosetF$ and $\resPosetR$.
\end{remark}

The positive-measure assumption in \Cref{thm:positive_measure_convergence} is needed because if an optimal witness set has measure zero, it can stay admissible forever but still never be sampled, so exact recovery is no longer guaranteed.
Finally, \Cref{alg:elimination_sampler} is intentionally a baseline procedure.
When $\impSetI$ is finite, the admissible set can often be maintained explicitly.
In structured implementation spaces, one may replace pure rejection sampling by more efficient search mechanisms, such as maintaining an explicit outer approximation of~$\mathcal{A}(H)$, using branch-and-bound over a hierarchical partition of~$\impSetI$, or combining optimistic elimination with priority rules derived from pessimistic evaluators. 
All such refinements preserve the same elimination principle.
A key advantage of \Cref{alg:elimination_sampler} is that it is extendable to a more general setting, as we will show in \Cref{sec:compositional}.

\subsection{Structural Instantiations of Optimism}
\label{sec:optimistic_evaluator_example}

In this section, we discuss several structural settings in which optimistic evaluators can be constructed explicitly. 
For clarity, we focus on the resource evaluator $\reqOpt$; corresponding upper-bound constructions for $\provOpt$ are obtained by dual arguments.
In the absence of structural information, one may always initialize $\reqOpt(\cdot,\emptyset)=\bot_\resPosetR$. 
The examples below yield sharper bounds whenever additional structure is available.

\subsubsection{Monotonicity}
\label{sec:monotonicity}

Assume that $\impSetI$ is itself a \poset and that $\req \colon \impSetI \to \resPosetR$ is monotone, i.e.,
$\I{j} \preceq_{\impSetI} \I{i} \Rightarrow \reqImp{j} \preceq_{\resPosetR} \reqImp{i}, \forall \I{i}, \I{j} \in \impSetI$.
%
%
Given a history $H \in \hist$ and implementation $\I{i} \in \impSetI$, define
\begin{align}
\label{eq:monotonicity_opt_set}
    S(\I{i},H) := \set{ \R{r} \mid (\tupII{\I{j}}{\R{r}} \in \Pi_{\impSetI\times\resPosetR}(H)) \land ( \I{j} \preceq_{\impSetI} \I{i} ) }.
\end{align}
If $S(\I{i},H)=\emptyset$, set $\reqOpt(\I{i},H)=\bot_\resPosetR$. 
Otherwise, any maximal element of $S(\I{i},H)$ may be used as an optimistic evaluator:
$\reqOpt(\I{i}, H) \in \max_{\preceq_{\resPosetR}} S(\I{i},H)$.
%
%
In general, $\max_{\preceq_{\resPosetR}} S$ is an antichain (with respect to the ordering $\succeq_\resPosetR$) and always exists (because $S$ is finite).
If, in addition, $\resPosetR$ is a join-semilattice, a canonical choice is
\begin{align}
\label{eq:monotone-opt}
    \reqOpt(\I{i},H) = \bigvee S(\I{i},H),
\end{align}
which is the tightest lower bound induced by the queried predecessors of $\I{i}$.

\begin{lemma}
\Cref{eq:monotone-opt} defines a valid optimistic evaluator (\Cref{def:opt_evaluators}).
\end{lemma}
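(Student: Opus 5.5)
We must show that for every $\I{i} \in \impSetI$ and every $H \in \hist$, $\reqOpt(\I{i},H) \preceq_\resPosetR \reqImp{i}$, where $\reqOpt$ is given by \eqref{eq:monotone-opt}. Condition \eqref{eq:optimistic_req} is the only requirement involving $\req$ in \Cref{def:opt_evaluators}; the functionality evaluator can be taken trivially as $\top_\funPosetF$, or built by the dual construction. We split into the two cases in the definition of the evaluator.

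\emph{Empty case.} If $S(\I{i},H)=\emptyset$, then $\reqOpt(\I{i},H)=\bot_\resPosetR$. By \Cref{asm:top_bottom}, $\bot_\resPosetR \preceq_\resPosetR \reqImp{i}$. This also agrees with the convention $\bigvee\emptyset=\bot_\resPosetR$, so it is consistent with \eqref{eq:monotone-opt} read literally.

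\emph{Nonempty case.} Take any $\R{r} \in S(\I{i},H)$. By \eqref{eq:monotonicity_opt_set}, there is $\I{j}$ with $\tupII{\I{j}}{\R{r}} \in \Pi_{\impSetI\times\resPosetR}(H)$ and $\I{j} \preceq_\impSetI \I{i}$. Histories in $\hist$ contain only genuine evaluations, so $\R{r} = \reqImp{j}$. Monotonicity of $\req$ then gives $\R{r} = \reqImp{j} \preceq_\resPosetR \reqImp{i}$. Hence $\reqImp{i}$ is an upper bound of the finite nonempty set $S(\I{i},H)$. In a join-semilattice, finite nonempty joins exist, and by the least-upper-bound property $\bigvee S(\I{i},H) \preceq_\resPosetR \reqImp{i}$. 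This is exactly \eqref{eq:optimistic_req}.

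The only delicate step is justifying $\R{r}=\reqImp{j}$. This uses the fact that $\hist$ is built from true triples $\tupIII{\I{j}}{\provImp{j}}{\reqImp{j}}$ rather than arbitrary elements of $\impSetI\times\funPosetF\times\resPosetR$. I will state this explicitly.

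The same argument also covers the non-lattice variant. Any $\R{r} \in \max_{\preceq_\resPosetR} S(\I{i},H)$ lies in $S(\I{i},H)$ and is therefore below $\reqImp{i}$. So choosing any maximal element is also a valid optimistic evaluator.
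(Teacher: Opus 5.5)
Your proposal is correct and follows the paper's argument: monotonicity of $\req$ makes $\reqImp{i}$ an upper bound of $S(\I{i},H)$, so both any maximal element and the join $\bigvee S(\I{i},H)$ lie below it. Two details the paper leaves implicit are spelled out in yours: the empty case, and the fact that each $\R{r}\in S(\I{i},H)$ equals $\reqImp{j}$ because histories consist only of true evaluations.
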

\begin{proof}
For every queried implementation $\I{j}$ satisfying $\I{j} \preceq_\impSetI \I{i}$, monotonicity gives~$\reqImp{j} \preceq_\resPosetR \reqImp{i}$.
Hence, every element of $S(\I{i},H)$ is a lower bound on $\reqImp{i}$. 
Any maximal element of this set is therefore still a valid lower bound, which proves optimism. 
If $\bigvee S(\I{i},H)$ exists, then it is the least upper bound of all elements in $S(\I{i},H)$, and since $\reqImp{i}$ is itself an upper bound of $S(\I{i},H)$, we obtain~$\bigvee S(\I{i},H) \preceq_\resPosetR \reqImp{i}.$
\end{proof}

\begin{remark}
A technical advantage of the join-based construction is that it is automatically monotone in history: enlarging the history can only add elements to $S(\I{i},H)$ and therefore can only increase $\reqOpt(\I{i},H)$. 
By contrast, if one selects an arbitrary maximal element of $S(\I{i},H)$, the monotonicity property in \eqref{eq:monotone_in_history} need not hold without an additional tie-breaking rule.
\end{remark}

\subsubsection{Lipschitz Continuity}
\label{sec:lipschitzness}

Assume that $\resPosetR \subseteq \mathbb{R}^m$ is equipped with the componentwise order, that $\impSetI$ is a metric space with metric $d$, and that $\req$ is $L$-Lipschitz with respect to $d$ and the $\ell_p$ norm:
$\lVert \reqImp{i} - \reqImp{j} \rVert_{p} \leq L d(\I{i},\I{j}), \forall \I{i},\I{j} \in \impSetI$.
%
%
Given a history $H \in \hist$ and implementation $\I{i} \in \impSetI$, define
$S(\I{i},H) \coloneq \set{ \R{r} - L d(\I{i},\I{j}) \mathbf{1} \mid \tupII{\I{j}}{\R{r}} \in \Pi_{\impSetI\times\resPosetR}(H) }$.
%
%
If $S(\I{i},H)=\emptyset$, set $\reqOpt(\I{i},H)=\bot_\resPosetR$. 
Otherwise, define
\begin{align}
\label{eq:lipschitz-opt}
    \reqOpt(\I{i}, H) = \bigvee S(\I{i},H).
\end{align}
%
If desired, this lower bound may additionally be clipped below by $\bot_\resPosetR$. 
This construction is the same ``optimism via smoothness'' idea used in Lipschitz bandits~\cite{bubeck2011x, lattimore2020bandit}.

\begin{lemma}
\Cref{eq:lipschitz-opt} defines a valid optimistic evaluator (\Cref{def:opt_evaluators}).
\end{lemma}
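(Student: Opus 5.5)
The plan is to verify \eqref{eq:optimistic_req} directly. The functionality inequality \eqref{eq:optimistic_prov} is handled by the dual construction mentioned at the start of \Cref{sec:optimistic_evaluator_example}. Fix $\I{i} \in \impSetI$ and $H \in \hist$. If $S(\I{i},H)=\emptyset$, then $\reqOpt(\I{i},H)=\bot_\resPosetR \preceq_\resPosetR \reqImp{i}$ holds trivially by \Cref{asm:top_bottom}. So the only real case is when $H$ contains at least one pair.

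The first step is to show that every element of $S(\I{i},H)$ is a componentwise lower bound on $\reqImp{i}$. Take $\tupII{\I{j}}{\R{r}} \in \Pi_{\impSetI\times\resPosetR}(H)$; since histories are valid (elements of $\hist$), $\R{r} = \reqImp{j}$. For each coordinate $k \in \{1,\dots,m\}$ we have
\begin{align*}
    \lvert \reqImp{i}_k - \reqImp{j}_k \rvert \leq \lVert \reqImp{i} - \reqImp{j} \rVert_\infty \leq \lVert \reqImp{i} - \reqImp{j} \rVert_p \leq L\, d(\I{i},\I{j}),
\end{align*}
using $\lVert\cdot\rVert_\infty \le \lVert\cdot\rVert_p$ for every $p \in [1,\infty]$ and the Lipschitz hypothesis. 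Hence $\reqImp{j}_k - L\,d(\I{i},\I{j}) \le \reqImp{i}_k$ for all $k$, i.e., $\R{r} - L\,d(\I{i},\I{j})\mathbf{1} \preceq_\resPosetR \reqImp{i}$.

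The second step is to pass to the join. Because $S(\I{i},H)$ is finite and nonempty in $\mathbb{R}^m$ with the componentwise order, its join exists and is the componentwise maximum. Since $\reqImp{i}$ is an upper bound of $S(\I{i},H)$, the least-upper-bound property gives $\bigvee S(\I{i},H) \preceq_\resPosetR \reqImp{i}$. This mirrors the argument for \eqref{eq:monotone-opt}.

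The only delicate point, and the step I expect to need care, is that $\resPosetR \subseteq \mathbb{R}^m$ may be a proper subset. In that case the componentwise maximum, or the shifted points themselves, need not lie in $\resPosetR$. I would handle this by computing the join in $\mathbb{R}^m$ and then mapping to $\resPosetR$, for instance via the clipping by $\bot_\resPosetR$ mentioned after \eqref{eq:lipschitz-opt}. Clipping (componentwise max with $\bot_\resPosetR$) preserves the bound, because $\bot_\resPosetR \preceq \reqImp{i}$ as well, so the max of two lower bounds is still a lower bound. If $\resPosetR$ is a box or orthant, the result then lies in $\resPosetR$. I would state this as a standing interpretation rather than belabor it.
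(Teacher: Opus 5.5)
Your proposal is correct and follows essentially the same route as the paper. Both use the coordinatewise chain $\lvert \req_k(\I{i}) - \req_k(\I{j})\rvert \le \lVert \reqImp{i}-\reqImp{j}\rVert_\infty \le \lVert \reqImp{i}-\reqImp{j}\rVert_p \le L\,d(\I{i},\I{j})$ and then take the componentwise maximum over queried points. Your treatment of the empty-history case via $\bot_\resPosetR$ and of $\resPosetR \subsetneq \mathbb{R}^m$ via clipping is simply more careful than the paper, which leaves both points implicit.
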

\begin{proof}
For any coordinate $k \in [m]$ and any queried~$\I{j}$, we have $|\req_k(\I{i}) - \req_k(\I{j})| \leq \|\reqImp{i} - \reqImp{j}\|_\infty \leq \|\reqImp{i} - \reqImp{j}\|_p \leq L d(\I{i},\I{j})$. Hence $\req_k(\I{j}) - L d(\I{i},\I{j}) \leq \req_k(\I{i})$, which gives the componentwise lower bound. Taking the componentwise maximum over all queried points yields the tightest such bound.
\end{proof}

\begin{remark}
This construction is automatically monotone in history: as $H$ grows, the set of candidate lower bounds only enlarges, and therefore $\reqOpt(\I{i},H)$ can only increase in the resource order.
\end{remark}

\subsubsection{Linear-Parametric Models}
\label{sec:linear_parametric}

As a third example, assume that $\resPosetR \subseteq \mathbb{R}^m$ is equipped with the componentwise order and that the resource map $\req$ belongs to a known linear-parametric class. 
Specifically, suppose there exist a known feature map~$\Phi \colon \impSetI \to \mathbb{R}^{m \times p}$ and an unknown parameter vector~$\theta^\star \in \mathbb{R}^p$ such that:
$\reqImp{i} = \Phi(\I{i})\, \theta^\star, \quad \forall \I{i} \in \impSetI$.
%
%
Further assume that the agent is given a prior set~$\Theta_0 \subseteq \mathbb{R}^p$ such that~$\theta^\star \in \Theta_0$. 
For a history $H \in \hist$, define the corresponding \emph{confidence set} as the subset of $\Theta_0$ consistent with all observations:
\begin{align}
    \Theta(H) = \set{ \theta \in \Theta_0 \mid \Phi(\I{j})\,\theta = \R{r} \,\, \forall \tupII{\I{j}}{\R{r}} \in \Pi_{\impSetI \times \resPosetR}(H) }.
    \label{eq:confidence_set}
\end{align}
By construction, $\theta^\star \in \Theta(H)$ for every $H \in \hist$, and $\Theta(H') \subseteq \Theta(H)$ whenever $H \subseteq H'$.
Given a history $H \in \hist$ and an implementation $\I{i} \in \impSetI$, define
$S(\I{i},H) \coloneq \set{ \Phi(\I{i})\,\theta \mid \theta \in \Theta(H) }$.
%
%
Assuming that the coordinatewise infimum of $S(\I{i},H)$ belongs to $\resPosetR$, define
\begin{align}
\label{eq:linear-opt}
    \reqOpt(\I{i}, H) = \bigwedge S(\I{i}, H).
\end{align}
Equivalently, each coordinate is obtained by solving a separate optimization:
\begin{align}
\label{eq:linear-opt-cord}
    [\reqOpt(\I{i}, H)]_k = \min_{\theta \in \Theta(H)} \, [\Phi(\I{i})\,\theta]_k, \qquad \forall k \in [m].
\end{align}
When $\Theta_0$ is a convex polytope, the confidence set $\Theta(H)$ is also a convex polytope (as the intersection of $\Theta_0$ with affine constraints), and each coordinate in \eqref{eq:linear-opt-cord} reduces to a \gls{acr:lp}.

\begin{lemma}
\Cref{eq:linear-opt} defines a valid optimistic evaluator (\Cref{def:opt_evaluators}).
\end{lemma}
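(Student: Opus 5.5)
The plan is to show directly that $\reqOpt(\I{i},H) \preceq_\resPosetR \reqImp{i}$ for every $\I{i}$ and $H$. The argument rests on the fact that the true parameter is always feasible, so a coordinatewise infimum over the confidence set cannot exceed its value at $\theta^\star$. Only the resource inequality \eqref{eq:optimistic_req} needs proof here. The functionality evaluator is either the trivial one, $\top_\funPosetF$, or is built by the dual (supremum) construction, and the same argument applies to it with inequalities reversed.

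\emph{Step 1: $\theta^\star$ lies in the confidence set.} Every pair $\tupII{\I{j}}{\R{r}} \in \Pi_{\impSetI\times\resPosetR}(H)$ arises from an actual query, because $H \in \hist$ consists only of true triples. Hence $\R{r} = \reqImp{j} = \Phi(\I{j})\theta^\star$. Together with $\theta^\star \in \Theta_0$, this gives $\theta^\star \in \Theta(H)$ by \eqref{eq:confidence_set}. In particular $\Theta(H)$ is nonempty, so $S(\I{i},H)$ is nonempty and contains the point $\Phi(\I{i})\theta^\star = \reqImp{i}$.

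\emph{Step 2: the infimum lies below $\reqImp{i}$.} Fix a coordinate $k \in [m]$. By \eqref{eq:linear-opt-cord},
\[
[\reqOpt(\I{i},H)]_k = \inf_{\theta\in\Theta(H)} [\Phi(\I{i})\theta]_k \le [\Phi(\I{i})\theta^\star]_k = [\reqImp{i}]_k .
\]
Where the paper writes $\min$, this should be read as $\inf$ whenever the minimum is not attained. The inequality holds for every $k$, so it holds in the componentwise order. Equivalently, $\bigwedge S(\I{i},H)$ is a lower bound of every element of $S(\I{i},H)$, and $\reqImp{i}$ is one of those elements. The standing assumption that this infimum belongs to $\resPosetR$ makes the comparison take place inside $\resPosetR$, which gives \eqref{eq:optimistic_req}.

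The only place needing care is Step 1. It depends on histories containing only genuine, noise-free evaluations of $\req$, and on the well-definedness assumption on the infimum. With noise, $\theta^\star$ could fall outside $\Theta(H)$. As a side remark, the inclusion $\Theta(H')\subseteq\Theta(H)$ for $H\subseteq H'$ shows that the infimum can only increase as the history grows. This yields the history-monotonicity \eqref{eq:monotone_in_history} needed in \Cref{lem:monotone_shrinking}, although the lemma itself does not require it.
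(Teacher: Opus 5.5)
Your proof is correct and follows essentially the same route as the paper: $\theta^\star \in \Theta(H)$ places $\reqImp{i}$ in $S(\I{i},H)$, so each coordinatewise minimum over $\Theta(H)$ is at most the corresponding coordinate of $\reqImp{i}$. The paper also folds the history-monotonicity argument via $\Theta(H') \subseteq \Theta(H)$ into this proof, which you correctly include as a side remark.
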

\begin{proof}
Since $\theta^\star \in \Theta(H)$ for all $H \in \hist$, the true value $\reqImp{i} = \Phi(\I{i})\,\theta^\star$ belongs to the set $S$. Hence, for each coordinate $k \in [m]$:
\begin{align*}
    [\reqOpt(\I{i}, H)]_k 
    = 
    \min_{\theta \in \Theta(H)} [\Phi(\I{i})\,\theta]_k 
    \leq 
    [\Phi(\I{i})\,\theta^\star]_k = \req_k(\I{i}),
\end{align*}
which gives $\reqOpt(\I{i}, H) \leq_{\resPosetR} \reqImp{i}$ componentwise. Moreover, since $\Theta(H') \subseteq \Theta(H)$ whenever $H \subseteq H'$, we have:
\begin{align*}
    \reqOpt(\I{i}, H) 
    &= 
    \bigwedge \set{ \Phi(\I{i})\,\theta \mid \theta \in \Theta(H) } \\
    &\leq_\resPosetR 
    \bigwedge \set{ \Phi(\I{i})\,\theta \mid \theta \in \Theta(H') } 
    = 
    \reqOpt(\I{i}, H'),
\end{align*}
confirming that the bound tightens monotonically as data accumulates~\eqref{eq:monotone_in_history}.
\end{proof}

\begin{remark}[Exact recovery in the linear-parametric case]
\label{rem:linear_recovery}
Stacking the observation constraints in \eqref{eq:confidence_set} yields $\Theta(H) = \{\theta \in \Theta_0 \mid \mathbf{\Phi}_H\,\theta = \mathbf{r}_H\}$, where $\mathbf{\Phi}_H \in \mathbb{R}^{nm \times p}$ concatenates the feature matrices of the $n$ queried implementations.
For exact recovery $\Theta(H) = \{\theta^\star\}$, it is sufficient that $\operatorname{rank}(\mathbf{\Phi}_H) = p$.
Consequently, if one can query implementations whose feature blocks contribute independent rank, then at most $p$ rank-one queries are sufficient.
At that point $\reqOpt(\I{i}, H) = \reqImp{i}$ for all $\I{i} \in \impSetI$ simultaneously, and the admissible set $\mathcal{A}(H)$ coincides with the one induced by the true maps.
\end{remark}

\begin{remark}[Model misspecification]
The analysis in this section is deterministic: it assumes that \eqref{eq:optimistic_req} and \eqref{eq:optimistic_prov} hold for all implementations and all histories. 
If the optimistic evaluators are instead given by high-probability confidence bounds, then the same soundness guarantees hold on the corresponding confidence event. 
If model misspecification is a concern, one may combine elimination with forced exploration, for example, by querying from the base measure $\mu$ with a small probability $\delta>0$ and using the rejection sampler otherwise. 
This prevents a single erroneous optimistic certificate from permanently excluding an implementation.
\end{remark}

%% file: Figures/elimination_mixed_example.tex
\begin{figure}[tb]
\centering
\hfill
\begin{minipage}[c]{0.20\columnwidth}
\centering
\setlength{\abovecaptionskip}{0pt}
\raisebox{0.19in}{\includegraphics[height=1.81in]{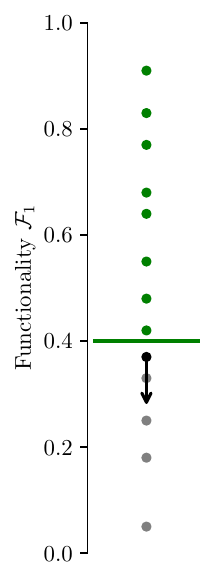}}
\end{minipage}
\begin{minipage}[c]{0.65\columnwidth}
\centering
\begin{tikzpicture}
\tikzset{edge/.style={draw=gray, thick}}
    \matrix (A) [matrix of nodes, row sep=0.5cm, column sep=0.3cm]
    {%
    & & & \color{gray}{l}\\
    & \color{gray}{i} & & \color{gray}{j} & & \color{gray}{k} \\
    \textbf{\R{e}} & & \textbf{\R{f}} & & \textbf{g} & & \color{gray}{h}\\
    & \color{gray}{b} & & \color{gray}{c} & & \textbf{\R{d}} \\
    & & & {\color{gray}a}\\
    };
    \draw[edge] (A-5-4)--(A-4-2);
    \draw[edge] (A-5-4)--(A-4-4);
    \draw[edge] (A-5-4)--(A-4-6);
    \draw[edge] (A-4-2)--(A-3-1);
    \draw[edge] (A-4-2)--(A-3-3);
    \draw[edge] (A-4-2)--(A-3-5);
    \draw[edge] (A-4-4)--(A-3-3);
    \draw[edge] (A-4-4)--(A-3-7);
    \draw[edge] (A-4-6)--(A-3-5);
    \draw[edge] (A-4-6)--(A-3-7);
    \draw[edge] (A-3-1)--(A-2-2);
    \draw[edge] (A-3-1)--(A-2-4);
    \draw[edge] (A-3-3)--(A-2-2);
    \draw[edge] (A-3-3)--(A-2-6);
    \draw[edge] (A-3-5)--(A-2-4);
    \draw[edge] (A-3-5)--(A-2-6);
    \draw[edge] (A-3-7)--(A-2-6);
    \draw[edge] (A-2-2)--(A-1-4);
    \draw[edge] (A-2-4)--(A-1-4);
    \draw[edge] (A-2-6)--(A-1-4);
\end{tikzpicture}
\vspace{12pt}
\end{minipage}
\hfill
\caption{
Example for an implementation $\I{i}$ that can be eliminated using either of the optimistic evaluators. Left: The functionality space ($\funPosetF=\mathbb{R}$) with $\F{f} = 0.4$. The green points satisfy the functionality, while the gray points do not. The black point with an arrow shows the value of $\provOpt(\I{i}, H)$. Right: Hasse diagram of the resource space $\resPosetR$ (discrete lattice) with the current antichain highlighted in red. The black bold letter shows the value of $\reqOpt(\I{i}, H)$.
}
\label{fig:elimination_example}
\end{figure}

%% file: Sections/online_codesign.tex
\section{Compositional Co-Design and Optimistic Propagation}
\label{sec:compositional}

We now study the case where the online problem described in \Cref{prb:online_design} adheres to a compositional structure. 
This setting is important for two related reasons. 
First, the system-level implementation space may be far too large to explore directly, even when each constituent block is relatively simple. 
Second, in many applications, only a small subset of the blocks are expensive to evaluate, while the remaining blocks admit explicit models and can therefore be handled algorithmically. 
The resulting problem is to learn an expensive subsystem \emph{through} the surrounding co-design graph, rather than in isolation.

\subsection{Multigraph Model}
\label{sec:co_design}

We model the compositional structure by a directed multigraph whose nodes are atomic \glspl{acr:dpi} and whose edges encode interconnections between resource and functionality coordinates~\cite{zardini2023co}.

\begin{definition}[Co-design problem]
\label{def:cdp}
A \emph{co-design problem} $\cdprb$ is a directed multigraph $\tupII{\mathcal{V}}{\mathcal{E}}$, where:
\begin{itemize}
    \item $\mathcal{V} = \set{v_1, \dots, v_K}$ is a set of atomic \glspl{acr:dpi}, each $v \in \mathcal{V}$ being a tuple $\tupIII{\impSetI_v}{\prov_v}{\req_v}$ over \posets $\funPosetF_v$ and $\resPosetR_v$ (\Cref{def:dpi}).
    \item $\mathcal{E}$ is a set of directed edges. An edge $e \in \mathcal{E}$ is a tuple $\tupII{ \tupII{v_1}{i_1} }{ \tupII{v_2}{i_2} }$, where $v_1, v_2 \in \mathcal{V}$ are two nodes, $i_1$ is an index of a resource of $v_1$, and $i_2$ is an index of a functionality of $v_2$, such that $\pi_{i_1} \resPosetR_{v_1} = \pi_{i_2} \funPosetF_{v_2}$%
    \footnote{$\pi_{i}$ indicates the projection operator that selects the $i$-th component of the corresponding tuple or product poset.}.
\end{itemize}
Every $\cdprb$ induces an equivalent system-level \gls{acr:dpi}, constructed as follows:
\begin{itemize}
    \item the system-level functionality space $\funPosetF$ is the product of all functionality coordinates not internally connected by edges in~$\mathcal{E}$;
    \item the system-level resource space $\resPosetR$ is the product of all resource coordinates not internally connected by edges in~$\mathcal{E}$;
    \item the system-level implementation space $\impSetI$ is the subset of $\prod_{v \in \mathcal{V}} \impSetI_v$ consisting of all tuples $\I{i}$ satisfying every interconnection constraint, for all~$\tupII{\tupII{v_1}{i_1}}{\tupII{v_2}{i_2}} \in \mathcal{E}$:
    \begin{align}
        \pi_{i_1}\req_{v_1}\big(\pi_{v_1}\I{i}\big)
        \preceq
        \pi_{i_2}\prov_{v_2}\big(\pi_{v_2}\I{i}\big);
        \label{eq:interconnection_constraint}
    \end{align}
    \item the system-level maps $\prov$ and $\req$ are obtained by projecting $\I{i} \in \impSetI$ onto the unconnected functionality and resource coordinates, respectively.
\end{itemize}
\end{definition}

\Cref{def:cdp} should be viewed as a structured representation of a large design problem. 
The atomic blocks specify local trade-offs, while the edges enforce compatibility among them (\Cref{fig:co-design}). 
The induced system-level \gls{acr:dpi} is therefore not an arbitrary Cartesian product of local implementation spaces: it is the subset of tuples consistent with the internal resource--functionality wiring. 
This is precisely the structure that enables local optimistic certificates to be propagated to system-level ones.

\input{Figures/co-design}

\subsection{Online Co-Design Problem}
\label{sec:online_codesign}

We now consider the case in which only one atomic block is expensive to evaluate.
Assume that a single node~$v_q \in \mathcal{V}$ is expensive, while all other nodes in~$\mathcal{V}\setminus\set{v_q}$ are computationally tractable. 
The online agent therefore chooses implementations only for~$v_q$, while the remaining blocks are handled by a solver for the tractable portion of the diagram.

The appropriate notion is that of a \emph{completion}: once the agent fixes a local implementation for the expensive block, the rest of the graph must be completed so that all interconnection constraints are satisfied and the external functionality target is met.

\begin{definition}[Completion]
\label{def:completion}
Given a co-design problem~$\cdprb = \tupII{\mathcal{V}}{\mathcal{E}}$ (\Cref{def:cdp}), an expensive block~$v_q \in \mathcal{V}$, a local implementation~$\I{i}_q \in \impSetI_{v_q}$, and a target functionality~$\F{f} \in \funPosetF$, a \emph{completion} of $\I{i}_q$ for $\F{f}$ is an element~$\I{j} \in \prod_{v \in \mathcal{V} \setminus \set{v_q}} \impSetI_v$ such that the composite implementation~$\I{i} \coloneq \impPointIqJ$ satisfies:
\begin{enumerate}
    \item \textbf{Interconnection feasibility}: All internal wires are satisfied. That is, $\forall \tupII{ \tupII{v_1}{i_1} }{ \tupII{v_2}{i_2} } \in \mathcal{E}$:
    \begin{align}
        \pi_{i_1} \req_{v_1}( \pi_{v_1} \I{i} ) \preceq \pi_{i_2} \prov_{v_2}( \pi_{v_2} \I{i} ).
    \end{align}
    \item \textbf{Target feasibility}: The system-level functionality meets the target:
    \begin{align}
        \F{f} \preceq_{\funPosetF} \prov(\I{i}).
    \end{align}
\end{enumerate}
We denote the set of all completions of $\I{i}_q$ for $\F{f}$ by $J_{\F{f}}(\I{i}_q)$%
\footnote{Under \gls{acr:dcpo} interface \posets and Scott-continuous atomic \glspl{acr:dp}, any nonempty completion problem has a minimal antichain representation $A$ whose upward closure is the full feasible completion-resource set.}.
\end{definition}

A local implementation of the expensive block can induce several system-level trade-offs, depending on how the tractable part of the graph is completed. 
The relevant object is therefore not a single resource value, but an antichain of nondominated system-level resources.

\begin{definition}[System-level antichain induced by a local implementation]
\label{def:local_to_system_antichain}
Given $\I{i}_q \in \impSetI_{v_q}$ and a target functionality $\F{f} \in \funPosetF$, define the \emph{induced system-level antichain} as:
\begin{align}
    \solveOf{\F{f}}(\I{i}_q)
    \coloneq
    \min_{\preceq_{\resPosetR}}\Big\{ \req(\I{i}) \ \Big|\ \I{j} \in J_{\F{f}}(\I{i}_q) \Big\}.
    \label{eq:system_antichain}
\end{align}
Thus, $\solveOf{\F{f}}(\I{i}_q) \in \anti \resPosetR \cup \set{\emptyset}$.
%
\end{definition}

\begin{remark}
When $\solveOf{\F{f}}(\I{i}_q)=\emptyset$, the local implementation $\I{i}_q$ cannot be completed to satisfy the target. 
Otherwise, $\solveOf{\F{f}}(\I{i}_q)$ is the system-level nondominated resource antichain that remains achievable after fixing $\I{i}_q$.
\end{remark}

To update the history, the environment need not return \emph{all} completions of $\I{i}_q$. 
It is enough to return one witness completion for each element of the antichain in \eqref{eq:system_antichain}.

\begin{definition}[Witness set]
\label{def:witness_set}
For $\I{i}_q \in \impSetI_{v_q}$, let $W_{\F{f}}(\I{i}_q)$ be the \emph{witness set}, i.e., any subset of $J_{\F{f}}(\I{i}_q)$ such that
\begin{align}
\set{ \req\big(\impPointIqJ\big) \mid \I{j} \in W_{\F{f}}(\I{i}_q) }
=
\solveOf{\F{f}}(\I{i}_q).
\end{align}
That is, $W_{\F{f}}(\I{i}_q)$ contains at least one completion witnessing each element of the system-level antichain.
\end{definition}

We can now state the online learning problem in the compositional setting.

\begin{problem}[Online co-design]
\label{prb:online_codesign}
Given a co-design problem $\cdprb = \tupII{\mathcal{V}}{\mathcal{E}}$ (\Cref{def:cdp}) with an expensive block $v_q \in \mathcal{V}$ and tractable blocks $\mathcal{V} \setminus \set{v_q}$, the agent is given a target functionality $\F{f} \in \funPosetF$ and interacts with the system as follows:
\begin{enumerate}
    \item the agent selects a local implementation $\I{i}_q \in \impSetI_{v_q}$.
    \item the environment solves the tractable completion problem and returns the system-level antichain $\solveOf{\F{f}}(\I{i}_q)$ together with any witness set $W_{\F{f}}(\I{i}_q)$;
    \item the agent may issue at most $N$ such queries for~$v_q$ (budget).
\end{enumerate}
The goal is to return a set~$I_N \subseteq \impSetI_{v_q}$,~$\card{I_N}\le N$, whose induced system-level resource antichain recovers the true solution $\FixFunMinResOf{f}$.
\end{problem}

\Cref{prb:online_codesign} makes explicit that a single expensive query may induce several composite implementations at the system level. 
Conceptually, evaluating a candidate $\I{i}_q$ means \emph{solving the rest of the diagram}: deciding whether $J_{\F{f}}(\I{i}_q)$ is nonempty and, if so, computing the nondominated system-level resources that remain achievable.
We visualize this setup in \Cref{fig:co-design}.

\begin{example}
Building on~\Cref{ex:observer_sensor}, the \gls{acr:dp} $\dprb_{\mathrm{sen}}$ may require hardware design and validation, and can therefore be regarded as the expensive block~$v_q$. 
By contrast, once a family of state estimation algorithms is fixed, the \gls{acr:dp} $\dprb_{\mathrm{obs}}$ may be tractable.
In this case, $\I{i}_q$ is a sensor choice, and the completion solves for target-compatible observer designs.
\end{example}

\begin{remark}[Tractability of completions]
Consider $\dprb_1 \in \dpiOf{\impSetI_1}{\funPosetF}{\mathcal{\R{Q}}}$ and $\dprb_2 \in \dpiOf{\impSetI_2}{\mathcal{\F{Q}}}{\resPosetR}$ with finite $\impSetI_1$ and $\impSetI_2$, and their series interconnection $\cdprb \in \dpiOf{\impSetI_1 \times \impSetI_2}{\funPosetF}{\resPosetR}$.
Assume that $\dprb_1$ is the expensive block and $\dprb_2$ is the tractable block.
The solution map $h \colon \mathcal{\F{Q}} \to \anti \resPosetR$ for the tractable block,
\begin{align}
    h(\F{q}) \coloneqq \min_{\preceq_{\resPosetR}} \set{ \req_2(\I{i}_2) \mid \F{q} \preceq_{\mathcal{\F{Q}}} \prov_2(\I{i}_2),\, \I{i}_2 \in \impSetI_2 },
\end{align}
maps each functionality $\F{q} \in \mathcal{\F{Q}}$ to its minimal resources and can be computed offline without knowing $\dprb_1$.
Then, for every online query $\I{i}_1 \in \impSetI_1$, the system-level functionality is $\prov_1(\I{i}_1)$ and the system-level minimal resources are $h(\req_1(\I{i}_1))$.
A complexity statement of this kind can be established for general $\cdprb$ with an arbitrary multigraph~\cite{zardini2023co}.
On the other hand, if one treats $\cdprb$ in this example as a monolithic black-box optimization problem, one might need to enumerate $\card{\impSetI_1} \times \card{\impSetI_2}$ implementations in the worst case.
\end{remark}

\subsection{Propagating the Optimistic Evaluation}
\label{sec:propagating_optimistic_eval}

We now show how local optimistic information on the expensive block propagates through the co-design graph to produce a system-level optimistic certificate.

Recall that the agent is given a target system functionality $\F{f} \in \funPosetF$, where $\funPosetF$ is the product of the unconnected functionalities of the multigraph. 
We assume local optimistic bounds $\reqOpt_q(\cdot, H)$ and $\provOpt_q(\cdot, H)$ for $v_q$, as described in \Cref{sec:optimistic_eval}. 
Because the remaining atomic problems $\mathcal{V} \setminus \set{v_q}$ are computationally tractable and fully known, we set their optimistic evaluators to their true maps (i.e., $\reqOpt_v = \req_v$ and $\provOpt_v = \prov_v$ for all $v \neq q$).
For a composite implementation~$\I{i} \in \prod_{v\in\mathcal V}\impSetI_v$, let~$\reqOpt(\I{i},H)$ and~$\provOpt(\I{i},H)$ denote the corresponding system-level optimistic functionality/resource values obtained by projecting the unconnected wires after replacing each map by its optimistic version.

\begin{definition}[Optimistically valid completions]
\label{def:optimistic_completion}
Given a history $H \in \hist$ and a local implementation $\I{i}_q \in \impSetI_{v_q}$, the set of \emph{optimistically valid completions} is
\begin{align}
    &J_{\F{f}}^\opt(\I{i}_q, H) 
    = \nonumber \\
    &\Bigg\{ 
    \I{j} \in \prod_{v \neq q} \impSetI_v 
    \Biggm| 
    \Big( 
    \pi_{i_1} \reqOpt_{v_1}( \pi_{v_1} \I{i}, H ) 
    \preceq 
    \pi_{i_2} \provOpt_{v_2}( \pi_{v_2} \I{i}, H ) \nonumber \\
    &\hspace{8pt} \forall \tupII{\tupII{v_1}{i_1}}{\tupII{v_2}{i_2}} \in \mathcal{E}
    \Big)
    \land \left( \F{f} \preceq_{\funPosetF} \provOpt(\I{i}, H) \right) 
    \Bigg\},
    \label{eq:opt_completions}
\end{align}
where $\I{i}\coloneq \impPointIqJ$.
\end{definition}

\begin{remark}
This definition is obtained from \Cref{def:completion} by replacing the true maps on the expensive block by optimistic ones. 
Because optimism relaxes feasibility constraints and improves apparent performance,~$J_{\F{f}}^\opt(\I{i}_q,H)$ is a superset of the true completion set.
\end{remark}

\begin{definition}[Propagated optimistic antichain]
\label{def:propagated_optimistic_antichain}
Given $\I{i}_q \in \impSetI_{v_q}$ and $H \in \hist$, define the \emph{propagated optimistic system-level antichain} by
\begin{align}
    \solveOf{\F{f}}^\opt(\I{i}_q, H)
    \coloneq
    \min_{\preceq_{\resPosetR}}\Big\{ \reqOpt(\I{i}, H) \ \Big|\ \I{j} \in J_{\F{f}}^\opt(\I{i}_q, H) \Big\},
    \label{eq:system_opt_antichain}
\end{align}
where $\reqOpt$ projects the composite implementation to the unconnected resource wires using the respective optimistic evaluators.
\end{definition}

The next result is the compositional counterpart of optimism for a single block.

\begin{theorem}[Propagation of optimistic feasibility and resources]
\label{lem:propagated_optimistic_antichain}
Fix a history $H \in \hist$ and a local implementation $\I{i}_q \in \impSetI_{v_q}$. 
Let $J_{\F{f}}(\I{i}_q)$ be the set of completions (\Cref{def:completion}), and let $J_{\F{f}}^\opt(\I{i}_q,H)$ and $\solveOf{\F{f}}^\opt(\I{i}_q,H)$ be defined as above. Assume that the local optimistic evaluators on $v_q$ satisfy \eqref{eq:optimistic_req} and \eqref{eq:optimistic_prov}. Then:
\begin{enumerate}
    \item \textbf{Optimistic feasibility is a relaxation}:
    \begin{align}
        J_{\F{f}}(\I{i}_q) \subseteq J_{\F{f}}^\opt(\I{i}_q,H).
        \label{eq:completion_relaxation}
    \end{align}
    In particular, if $J_{\F{f}}^\opt(\I{i}_q,H)=\emptyset$, then $J_{\F{f}}(\I{i}_q)=\emptyset$.
    
    \item \textbf{Lower approximation of system-level resources}:  
    If $J_{\F{f}}^\opt(\I{i}_q,H)\neq\emptyset$, then
    \begin{align}
        \upper \solveOf{\F{f}}(\I{i}_q)
        \subseteq
        \upper \solveOf{\F{f}}^\opt(\I{i}_q, H).
        \label{eq:propagated_upper_inclusion}
    \end{align}
\end{enumerate}
\end{theorem}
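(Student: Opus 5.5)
The plan is to prove both parts by an elementwise comparison between a completion $\I{j} \in J_{\F{f}}(\I{i}_q)$ and the same $\I{j}$ viewed under optimistic maps, using only \eqref{eq:optimistic_req}, \eqref{eq:optimistic_prov}, transitivity, and the componentwise order on product \posets. A preliminary observation drives everything: for a composite $\I{i} = \impPointIqJ$, each local optimistic map agrees with the true map on $v \neq q$ and bounds it optimistically on $v_q$. Since the product order is componentwise and projections $\pi_k$ are monotone, we obtain for every resource coordinate $\pi_{k}\reqOpt_v(\pi_v\I{i},H) \preceq \pi_k \req_v(\pi_v\I{i})$ and for every functionality coordinate $\pi_k\prov_v(\pi_v\I{i}) \preceq \pi_k\provOpt_v(\pi_v\I{i},H)$ (with equality when $v\neq q$). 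Because the system-level maps are projections onto unconnected coordinates, it follows that $\reqOpt(\I{i},H) \preceq_\resPosetR \req(\I{i})$ and $\prov(\I{i}) \preceq_\funPosetF \provOpt(\I{i},H)$.

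For part 1, take $\I{j} \in J_{\F{f}}(\I{i}_q)$. For each edge $\tupII{\tupII{v_1}{i_1}}{\tupII{v_2}{i_2}}$, I chain
\[
\pi_{i_1}\reqOpt_{v_1}(\pi_{v_1}\I{i},H) \preceq \pi_{i_1}\req_{v_1}(\pi_{v_1}\I{i}) \preceq \pi_{i_2}\prov_{v_2}(\pi_{v_2}\I{i}) \preceq \pi_{i_2}\provOpt_{v_2}(\pi_{v_2}\I{i},H),
\]
where the middle step is interconnection feasibility from \Cref{def:completion}. Target feasibility similarly gives $\F{f} \preceq_\funPosetF \prov(\I{i}) \preceq_\funPosetF \provOpt(\I{i},H)$. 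Hence $\I{j} \in J^\opt_{\F{f}}(\I{i}_q,H)$, and the emptiness statement is the contrapositive.

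For part 2, take $\R{r} \in \upper\solveOf{\F{f}}(\I{i}_q)$. Then there exist $\R{r}_0 \in \solveOf{\F{f}}(\I{i}_q)$ with $\R{r}_0 \preceq \R{r}$, and a completion $\I{j} \in J_{\F{f}}(\I{i}_q)$ with $\req(\I{i}) = \R{r}_0$. By part 1, $\I{j} \in J^\opt_{\F{f}}(\I{i}_q,H)$, so $\reqOpt(\I{i},H)$ belongs to the set whose minima define $\solveOf{\F{f}}^\opt(\I{i}_q,H)$, and $\reqOpt(\I{i},H) \preceq \R{r}_0 \preceq \R{r}$.

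The main obstacle is the final step: we need some minimal element of that optimistic set lying below $\reqOpt(\I{i},H)$. This is exactly the upward-generation-by-minima property, which is not automatic for infinite sets in an arbitrary \poset. I would invoke the standing assumption, in the spirit of \Cref{asm:feasibility} and the footnote to \Cref{def:completion} (\glspl{acr:dcpo} and Scott continuity), that the optimistic completion problem likewise has a minimal antichain whose upper closure is the full optimistic feasible resource set. I would state this explicitly as the hypothesis used. When the relevant implementation spaces are finite, the property holds trivially, since every element of a finite \poset lies above a minimal one. With this property, transitivity places $\R{r}$ in $\upper\solveOf{\F{f}}^\opt(\I{i}_q,H)$, which proves \eqref{eq:propagated_upper_inclusion}.
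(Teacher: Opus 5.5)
Your proposal is correct and follows the paper's proof step for step: part 1 uses the same edgewise chain $\pi_{i_1}\reqOpt_{v_1} \preceq \pi_{i_1}\req_{v_1} \preceq \pi_{i_2}\prov_{v_2} \preceq \pi_{i_2}\provOpt_{v_2}$ together with the target chain, and part 2 uses the same pull-back of a witness completion through part 1 followed by optimism on $\reqOpt$. The one difference is in your favor: the paper simply asserts that some element of $\solveOf{\F{f}}^\opt(\I{i}_q,H)$ lies below $\reqOpt(\I{i}',H)$, whereas you correctly flag that this step needs upward generation by minima for the optimistic resource set. That property is automatic when the tractable implementation spaces are finite, and otherwise it is an extra hypothesis in the spirit of the footnote to \Cref{def:completion}.
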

\begin{proof}
For every block $v \neq v_q$, the optimistic maps agree with the true maps by definition. 
Hence, the pointwise bounds \eqref{eq:optimistic_req} and \eqref{eq:optimistic_prov} hold for all blocks, with equality away from $v_q$.

We first prove \eqref{eq:completion_relaxation}.
Let $\I{j} \in J_{\F{f}}(\I{i}_q)$ and set $\I{i} \coloneq \impPointIqJ$. 
For any edge $\tupII{\tupII{v_1}{i_1}}{\tupII{v_2}{i_2}} \in \mathcal{E}$, optimism~\eqref{eq:optimistic_req} on $v_1$ and~\eqref{eq:optimistic_prov} on $v_2$ give:
\begin{align*}
    \pi_{i_1} \reqOpt_{v_1}( \pi_{v_1} \I{i}, H )
    &\preceq
    \pi_{i_1} \req_{v_1}( \pi_{v_1} \I{i} ) \\
    &\preceq
    \pi_{i_2} \prov_{v_2}( \pi_{v_2} \I{i} ) \\
    &\preceq
    \pi_{i_2} \provOpt_{v_2}( \pi_{v_2} \I{i}, H )
\end{align*}
where the middle inequality is the true interconnection constraint from \Cref{def:completion}. 
Similarly, $\F{f} \preceq_{\funPosetF} \prov(\I{i}) \preceq_{\funPosetF} \provOpt(\I{i}, H)$ by target feasibility and~\eqref{eq:optimistic_prov}. 
Hence $\I{j} \in J_{\F{f}}^\opt(\I{i}_q, H)$, and since $\I{j}$ was arbitrary, $J_{\F{f}}(\I{i}_q) \subseteq J_{\F{f}}^\opt(\I{i}_q, H)$.
The contrapositive yields: if $J_{\F{f}}^\opt(\I{i}_q, H) = \emptyset$, then $J_{\F{f}}(\I{i}_q) = \emptyset$, and we may set $\solveOf{\F{f}}^\opt(\I{i}_q, H) = \emptyset$ for immediate rejection.

We next prove \eqref{eq:propagated_upper_inclusion}.
Assume $J_{\F{f}}^\opt(\I{i}_q, H) \neq \emptyset$ and take $\R{r} \in \upper \solveOf{\F{f}}(\I{i}_q)$. 
By definition of upper closure, there exists $\R{r}' \in \solveOf{\F{f}}(\I{i}_q)$ with $\R{r}' \preceq_{\resPosetR} \R{r}$. 
By~\eqref{eq:system_antichain}, there exists $\I{j}' \in J_{\F{f}}(\I{i}_q)$ with $\R{r}' = \req(\tupII{\I{i}_q}{\I{j}'})$. 
Let $\I{i}' \coloneq \tupII{\I{i}_q}{\I{j}'}$. 
By~\eqref{eq:completion_relaxation}, $\I{j}' \in J_{\F{f}}^\opt(\I{i}_q, H)$, so $\reqOpt(\I{i}', H)$ belongs to the set minimized in~\eqref{eq:system_opt_antichain}. 
There thus exists $\R{r}'' \in \solveOf{\F{f}}^\opt(\I{i}_q, H)$ such that:
\begin{align}
    \R{r}'' \preceq_{\resPosetR} \reqOpt(\I{i}', H) \preceq_{\resPosetR} \req(\I{i}') = \R{r}' \preceq_{\resPosetR} \R{r},
\end{align}
where the second inequality is optimism~\eqref{eq:optimistic_req}. 
Hence $\R{r} \in \upper \solveOf{\F{f}}^\opt(\I{i}_q, H)$.
\end{proof}

\Cref{lem:propagated_optimistic_antichain} yields the elimination rule actually used by the compositional algorithm.

\begin{corollary}[Safe elimination of local implementations]
\label{cor:safe_local_elimination}
Fix $H \in \hist$ and $\I{i}_q \in \impSetI_{v_q}$. If either
\begin{align*}
\solveOf{\F{f}}^\opt(\I{i}_q,H)=\emptyset
\,\ \text{or} \,\
\solveOf{\F{f}}^\opt(\I{i}_q,H) \subseteq \upper \antiOf{\F{f}}(H),
\end{align*}
then $\solveOf{\F{f}}(\I{i}_q) \subseteq \upper \antiOf{\F{f}}(H)$.
Equivalently, no completion of $\I{i}_q$ can add a new element to the current target-feasible antichain.
\end{corollary}

\begin{proof}
If $\solveOf{\F{f}}^\opt(\I{i}_q,H)=\emptyset$, the claim follows from the final statement of \Cref{lem:propagated_optimistic_antichain}. 
Otherwise,~$
\solveOf{\F{f}}^\opt(\I{i}_q,H) \subseteq \upper \antiOf{\F{f}}(H)
\Longrightarrow
\upper \solveOf{\F{f}}^\opt(\I{i}_q,H) \subseteq \upper \antiOf{\F{f}}(H),$ 
since $\upper \antiOf{\F{f}}(H)$ is an upper set. 
Combining this with \eqref{eq:propagated_upper_inclusion} gives
$\upper \solveOf{\F{f}}(\I{i}_q) \subseteq \upper \solveOf{\F{f}}^\opt(\I{i}_q,H) \subseteq \upper \antiOf{\F{f}}(H)$,
which implies the result.
\end{proof}

As in the single-block case, one also expects the propagated optimistic antichain to tighten monotonically with history.

\begin{proposition}[Monotonicity of propagated optimism]
\label{prop:propagated_monotonicity}
Let $H,H' \in \hist$ satisfy $H \subseteq H'$. 
Assume that the optimistic evaluators on the expensive block satisfy
\begin{align}
    \reqOpt_q(\I{i}_q,H) &\preceq \reqOpt_q(\I{i}_q,H'),
    \label{eq:propagated_monotone_req}
    \\
    \provOpt_q(\I{i}_q,H') &\preceq \provOpt_q(\I{i}_q,H),
    \label{eq:propagated_monotone_prov}
\end{align}
for every $\I{i}_q \in \impSetI_{v_q}$. Then, for every fixed $\I{i}_q \in \impSetI_{v_q}$,
\begin{align}
    J_{\F{f}}^\opt(\I{i}_q,H') &\subseteq J_{\F{f}}^\opt(\I{i}_q,H),
    \label{eq:completion_shrinks}
    \\
    \upper \solveOf{\F{f}}^\opt(\I{i}_q,H')
    &\subseteq
    \upper \solveOf{\F{f}}^\opt(\I{i}_q,H).
    \label{eq:opt_antichain_shrinks}
\end{align}
\end{proposition}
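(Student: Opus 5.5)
The proof proceeds in two steps: first \eqref{eq:completion_shrinks} by a pointwise argument on each constraint, then \eqref{eq:opt_antichain_shrinks} by adapting the argument for \eqref{eq:propagated_upper_inclusion} in \Cref{lem:propagated_optimistic_antichain}.

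\textbf{Completion sets shrink.} Fix $\I{i}_q$, take $\I{j} \in J_{\F{f}}^\opt(\I{i}_q,H')$, and set $\I{i} \coloneq \impPointIqJ$. Every block $v \neq v_q$ uses its true maps $\req_v$ and $\prov_v$, so its optimistic evaluators do not depend on history. Consequently, \eqref{eq:propagated_monotone_req}--\eqref{eq:propagated_monotone_prov} give, for every block $v$ and every coordinate,
\begin{align*}
\reqOpt_v(\pi_v \I{i},H) &\preceq \reqOpt_v(\pi_v\I{i},H'),\\
\provOpt_v(\pi_v\I{i},H') &\preceq \provOpt_v(\pi_v\I{i},H),
\end{align*}
with equality away from $v_q$. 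Projections of product posets are monotone, so these inequalities hold coordinatewise.

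For each edge $\tupII{\tupII{v_1}{i_1}}{\tupII{v_2}{i_2}}$ we then have the chain
\[
\pi_{i_1}\reqOpt_{v_1}(\cdot,H) \preceq \pi_{i_1}\reqOpt_{v_1}(\cdot,H') \preceq \pi_{i_2}\provOpt_{v_2}(\cdot,H') \preceq \pi_{i_2}\provOpt_{v_2}(\cdot,H).
\]
The middle inequality is membership in $J_{\F{f}}^\opt(\I{i}_q,H')$, and the outer two are history monotonicity; this mirrors the chain used in \Cref{lem:propagated_optimistic_antichain}. For the target constraint, $\provOpt(\I{i},\cdot)$ is a projection onto unconnected functionality wires of the blockwise optimistic maps, so $\provOpt(\I{i},H') \preceq_\funPosetF \provOpt(\I{i},H)$ in the product order. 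Transitivity then gives $\F{f} \preceq_\funPosetF \provOpt(\I{i},H)$. Hence $\I{j} \in J_{\F{f}}^\opt(\I{i}_q,H)$.

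\textbf{Upper sets of antichains shrink.} Take $\R{r} \in \upper \solveOf{\F{f}}^\opt(\I{i}_q,H')$. By the definition of upper closure there is $\R{r}' \in \solveOf{\F{f}}^\opt(\I{i}_q,H')$ with $\R{r}'\preceq \R{r}$, and by \eqref{eq:system_opt_antichain} there is a witness $\I{j}' \in J_{\F{f}}^\opt(\I{i}_q,H')$ with $\R{r}' = \reqOpt(\I{i}',H')$, where $\I{i}' \coloneq \tupII{\I{i}_q}{\I{j}'}$. By the first step, $\I{j}' \in J_{\F{f}}^\opt(\I{i}_q,H)$, so $\reqOpt(\I{i}',H)$ lies in the set minimized at history $H$. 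Because \Cref{asm:feasibility} grants upward generation by minima, there is $\R{r}'' \in \solveOf{\F{f}}^\opt(\I{i}_q,H)$ with $\R{r}''\preceq \reqOpt(\I{i}',H)$. Resource-side history monotonicity, projected onto the unconnected resource wires, gives $\reqOpt(\I{i}',H) \preceq \reqOpt(\I{i}',H')$. Chaining, $\R{r}'' \preceq \reqOpt(\I{i}',H) \preceq \R{r}' \preceq \R{r}$, so $\R{r} \in \upper\solveOf{\F{f}}^\opt(\I{i}_q,H)$. The degenerate case is trivial: if $J_{\F{f}}^\opt(\I{i}_q,H')$ is empty, the left-hand side of \eqref{eq:opt_antichain_shrinks} is empty.

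\textbf{Main obstacle.} The only delicate point is the second step's requirement that every element of the optimistic resource set lie above a minimal element. The paper uses this implicitly in \Cref{lem:propagated_optimistic_antichain}, and I will invoke it the same way. The justification is the standing existence-of-minima assumption (the \gls{acr:dcpo}/Scott-continuity footnote); alternatively, it holds immediately whenever the relevant sets are finite.
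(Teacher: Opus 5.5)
Your proposal is correct and follows the paper's proof essentially step for step: the same per-constraint chain shows the completion sets shrink, and the same witness-plus-minimal-element chain shows the upper closures shrink. The one step you flag, the existence of a minimal element below $\reqOpt(\I{i}',H)$, is also used tacitly by the paper. Strictly speaking, \Cref{asm:feasibility} only concerns true target-feasible resources, not the optimistic set, so the finiteness or footnote justification you give at the end is the one that actually applies here.
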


\begin{proof}
Fix $\I{i}_q \in \impSetI_{v_q}$.

To prove \eqref{eq:completion_shrinks}, let $\I{j}\in J_{\F{f}}^\opt(\I{i}_q,H')$ and set $\I{i}\coloneq \impPointIqJ$. 
Every optimistic interconnection inequality and the optimistic target-feasibility inequality hold under $H'$. 
Because resource optimism increases and functionality optimism decreases with history only on the expensive block, every such inequality that holds under $H'$ also holds under $H$. 
Hence $\I{j}\in J_{\F{f}}^\opt(\I{i}_q,H)$.

To prove \eqref{eq:opt_antichain_shrinks}, take any $\R{r}' \in \solveOf{\F{f}}^\opt(\I{i}_q,H')$.
Then there exists $\I{j}' \in J_{\F{f}}^\opt(\I{i}_q,H')$ such that, with $\I{i}'\coloneq \tupII{\I{i}_q}{\I{j}'}$, $\R{r}' = \reqOpt(\I{i}',H')$.
By \eqref{eq:completion_shrinks}, we also have $\I{j}' \in J_{\F{f}}^\opt(\I{i}_q,H)$. 
Moreover, $\reqOpt(\I{i}',H) \preceq \reqOpt(\I{i}',H')$
by \eqref{eq:propagated_monotone_req}. 
Since $\reqOpt(\I{i}',H)$ belongs to the set minimized in \eqref{eq:system_opt_antichain} under history $H$, there exists $\R{r} \in \solveOf{\F{f}}^\opt(\I{i}_q,H)$
such that $\R{r} \preceq \reqOpt(\I{i}',H) \preceq \reqOpt(\I{i}',H') = \R{r}'$.
Therefore $\R{r}' \in \upper \solveOf{\F{f}}^\opt(\I{i}_q,H)$, proving \eqref{eq:opt_antichain_shrinks}.
\end{proof}

\begin{algorithm}[tb]
\caption{Rejection Sampler with Propagated Optimistic Evaluators}
\label{alg:elimination_sampler_multigraph}
\begin{algorithmic}[1]
    \State \textbf{input:} Multigraph $\cdprb = \tupII{\mathcal{V}}{\mathcal{E}}$, expensive node $v_q$, target functionality $\F{f}$, evaluation budget $N$
    \State \textbf{initialize:} History $H=\{\}$
    \State \textbf{preprocess:} Reduce tractable subgraph $\mathcal{V}\setminus\{v_q\}$

    \For{$t = 1, \dots, N$}
        \State Set $\mathsf{Accept}=\bot$
        \While{$\neg \mathsf{Accept}$}
            \State Sample $\I{i}_q \sim \mu(\impSetI_{v_q} \setminus \Pi_{\impSetI_{v_q}}(H))$
            \State Evaluate $A^\opt \gets \solveOf{\F{f}}^\opt(\I{i}_q, H)$
            \State Set $\mathsf{Accept}=\top$ if $( A^\opt \neq \emptyset ) \land ( \exists \R{r} \in A^\opt : \R{r} \notin \upper \antiOf{\F{f}}(H) )$
        \EndWhile

        \State \textbf{query expensive block:}
        \State Receive $S \gets \set{ \tupIII{\impPointIqJ}{\prov(\impPointIqJ)}{\req(\impPointIqJ)} \mid \I{j} \in W_{\F{f}}(\I{i}_q) }$ from $\cdprb$

        \For{\textbf{each} $\tupIII{\I{i}}{\provImp{i}}{\reqImp{i}} \in S$}
            \State Append $\tupIII{\I{i}}{\provImp{i}}{\reqImp{i}}$ to the history $H$
        \EndFor
    \EndFor

    \State \textbf{return:} Antichain $\antiOf{\F{f}}(H)$ and its corresponding composite implementations
\end{algorithmic}
\end{algorithm}

\subsection{Algorithm}
\label{sec:compositional_algorithm}

\Cref{alg:elimination_sampler_multigraph} is the exact compositional analogue of \Cref{alg:elimination_sampler}. 
The compositional rejection sampler operates over local implementations of the expensive block, but uses the propagated optimistic antichain as its acceptance test.
The only difference is that the acceptance test is no longer based on a single optimistic resource value, but on the propagated optimistic system-level antichain $\solveOf{\F{f}}^\opt(\I{i}_q,H)$ associated with the local implementation $\I{i}_q$.
By design, \Cref{alg:elimination_sampler_multigraph} accommodates heterogeneous spaces; for instance, the local space $\resPosetR_v$ may be a cone-ordered vector space, whereas the global system space $\resPosetR$ may be equipped only with a discrete partial order.
\Cref{alg:elimination_sampler_multigraph} has correctness guarantees by virtue of \Cref{lem:propagated_optimistic_antichain}. 
Specifically, the propagated optimistic evaluators satisfy the pointwise bounds required by \Cref{cor:safe_local_elimination}, so composite implementations that improve the current solution are never eliminated. 
Moreover, if the local optimistic evaluators satisfy the monotonicity-in-history condition, then the system-level admissible set also shrinks monotonically (\Cref{prop:propagated_monotonicity}).

\subsection{Implementation Details}
\label{sec:implementation_details}

Evaluating the propagated optimistic antichain $\solveOf{\F{f}}^\opt(\I{i}_q, H)$ requires solving the tractable subgraph $\mathcal{V}\setminus\{v_q\}$ for each candidate $\I{i}_q$.
In general, this amounts to computing a least fixed point of a monotone operator over the multigraph, which can be costly when invoked repeatedly inside the online loop.
We employ two complementary techniques to mitigate this cost.

\textbf{Pre-solving and component merging.}
Before the online loop begins, we simplify the diagram while treating $v_q$ as an unknown interface.
Concretely, we (i)~collapse every subgraph that does not depend on $v_q$ into an equivalent reduced component, and (ii)~resolve internal feedback loops that are entirely contained within tractable blocks%
\footnote{Both operations are valid semantics-preserving rewrites in the compact closed symmetric monoidal category $\mathbf{DP}$ of design problems~\cite{zardini2023co}.}.
The result is a smaller diagram in which $v_q$ connects to the rest of the system through a minimal set of interface wires, so that each propagation step solves only the reduced problem.

\textbf{Early termination of fixed-point iterations.}
When the diagram contains feedback, propagation proceeds by iterating a monotone operator (i.e., using Kleene’s
algorithm~\cite{davey2002introduction}) until convergence.
For elimination, however, the exact fixed point is not always needed: it suffices to certify that the optimistic outcome is already dominated by $\upper \antiOf{\F{f}}(H)$.
We therefore compare with $\upper \antiOf{\F{f}}(H)$ throughout the iterations and terminate as soon as dominance is established, thereby converting many full fixed-point computations into short runs.

%% file: Figures/co-design.tex
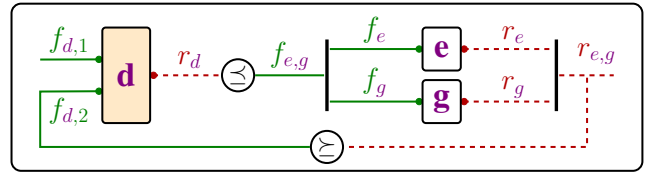
\begin{figure}[t]
\centering
\definecolor{highlight}{RGB}{255,222,172}
\begin{adjustbox}{max width=0.95\columnwidth, center}
\begin{tikzpicture}[
    DP,
    dpx=1.2cm,
    dpy=1cm,
    dp port dot scale=0.4,
    cmp/.style={
        draw,
        circle,
        fill=white,
        line width=1.2pt,
        inner sep=1.2pt,
        font=\Large
    },
]

\draw[rounded corners=2mm, line width=1.2pt] (0,0) rectangle (11,3.5);

\node[cmp] (c2) at (5.5,0.5) {$\succeq$};

\node[
    dp={2}{1},
    fill=highlight!66,
    line width=1.2pt,
    minimum height=2cm,
    minimum width=1cm,
    font=\huge,
] (d) at (2,2)
{\textcolor{dppurple}{\textbf{d}}};

\draw[dpgreen, line width=1.2pt] (0.5,2.33) -- ($(d_fun1)-(0.05,0)$);
\node[text=dpgreen, font=\LARGE] at (1,2.71)
{$f_{\textcolor{dppurple}{d},1}$};

\draw[dpgreen, line width=1.2pt]
    ($(d_fun2)-(0.05,0)$) -- (0.5,1.66) -- (0.5,0.50) -- (c2.west);
\node[text=dpgreen, font=\LARGE] at (1,1.24)
{$f_{\textcolor{dppurple}{d},2}$};

\node[cmp] (c1) at (3.95,2) {$\preceq$};

\draw[dpred, dashed, line width=1.2pt] ($(d_res1)+(0.05,0)$) -- (c1.west);
\node[text=dpred, font=\LARGE] at (3.1,2.33)
{$r_{\textcolor{dppurple}{d}}$};

\draw[dpgreen, line width=1.2pt] (c1.east) -- (5.45,2);
\node[text=dpgreen, font=\LARGE] at (4.85,2.40)
{$f_{\textcolor{dppurple}{e,g}}$};

\draw[line width=2pt] (5.5,1.25) -- (5.5,2.75);

\node[
    dp={1}{1},
    draw=black,
    fill=white,
    line width=1.2pt,
    minimum height=0.90cm,
    minimum width=0.8cm,
    font=\huge,
] (e) at (7.5,2.55)
{\textcolor{dppurple}{\textbf{e}}};

\node[
    dp={1}{1},
    draw=black,
    fill=white,
    line width=1.2pt,
    minimum height=0.90cm,
    minimum width=0.80cm,
    font=\huge,
] (g) at (7.5,1.45)
{\textcolor{dppurple}{\textbf{g}}};

\draw[dpgreen, line width=1.2pt] (5.55,2.55) -- ($(e_fun1)-(0.05,0)$);
\draw[dpgreen, line width=1.2pt] (5.55,1.45) -- ($(g_fun1)-(0.05,0)$);

\node[text=dpgreen, font=\LARGE] at (6.33,2.95)
{$f_{\textcolor{dppurple}{e}}$};
\node[text=dpgreen, font=\LARGE] at (6.33,1.85)
{$f_{\textcolor{dppurple}{g}}$};

\draw[dpred, dashed, line width=1.2pt] ($(e_res1)+(0.05,0)$) -- ($(9.5,2.55)-(0.05,0)$);
\draw[dpred, dashed, line width=1.2pt] ($(g_res1)+(0.05,0)$) -- ($(9.5,1.45)-(0.05,0)$);

\node[text=dpred, font=\LARGE] at (8.75,2.85)
{$r_{\textcolor{dppurple}{e}}$};
\node[text=dpred, font=\LARGE] at (8.75,1.75)
{$r_{\textcolor{dppurple}{g}}$};

\draw[line width=2pt] (9.5,1.25) -- (9.5,2.75);

\draw[dpred, dashed, line width=1.2pt] 
    (9.55,2.0) -- (10.5,2.0);
\draw[dpred, dashed, line width=1.2pt]
    (10.035,1.95) -- (10.035,0.50) -- (c2.east);
\node[text=dpred, font=\LARGE] at (10.2,2.45)
{$r_{\textcolor{dppurple}{e,g}}$};

\end{tikzpicture}
\end{adjustbox}
\caption{
An example of a co-design problem $\cdprb$ (\Cref{def:cdp}) with multiple atomic \glspl{acr:dpi}. The highlighted block represents a subsystem that is expensive to evaluate, e.g., because it is only available through simulation.
}
\label{fig:co-design}
\end{figure}

%% file: Sections/case_study.tex
\section{Case Study}
\label{sec:case_study}

In this section, we evaluate the proposed optimistic elimination framework on both synthetic structured problems and engineering co-design examples. 
Our goals are twofold: first, to quantify the sample-efficiency gains obtained by exploiting structural optimism; second, to assess whether those gains persist on realistic system co-design problems in which each evaluation requires solving a computationally expensive oracle.
Throughout, all methods are compared under the same budget, measured in the number of expensive evaluations.
We begin by describing the reference algorithms and the adaptations needed for a fair comparison.

\subsection{Benchmark Algorithms and Evaluation Protocol}

We compare our method against four \glspl{acr:ea}:
\MOEAD~\cite{zhang2007moea},
\NSGAIII~\cite{deb2013evolutionary},
\RVEA~\cite{cheng2016reference}, and
\KGB~\cite{ye2022knowledge}.
\MOEAD~\cite{zhang2007moea} decomposes a multi-objective problem into a set of scalar subproblems and solves them cooperatively, exploiting neighborhood structure among adjacent subproblems.
\NSGAIII~\cite{deb2013evolutionary} maintains a population through non-dominated sorting and uses a set of predefined reference points to preserve diversity across the Pareto front.
\RVEA~\cite{cheng2016reference} guides a population toward the Pareto front using reference vectors with an angle-penalized distance metric, and dynamically adapts the reference vector distribution to handle differently scaled objectives and irregular front shapes.
Finally, \KGB~\cite{ye2022knowledge} uses a naive Bayesian classifier trained on historically accumulated solutions to filter randomly generated candidates and predict a high-quality initial population for optimization.
We also compare with \qLogEHVI, a \gls{acr:bo} method with expected hypervolume improvement acquisition function whose log formulation improves numerical stability~\cite{ament2023unexpected}.

As in \Cref{prb:online_codesign}, all baselines operate on the same local implementation space~$\impSetI_{v_q}$ exposed to the online agent.
Evaluating a candidate local implementation always means invoking the same expensive oracle, namely, the environment that solves the completion problem and returns the corresponding system-level resource value (\Cref{def:completion}).
Thus, all methods are compared under a common notion of sample complexity%
\footnote{All experiments were run on a machine with an AMD Ryzen 9 7900X CPU (12 cores, 4.7\,GHz) and 64\,GB of RAM.}.

Three further adaptations are needed to make the comparisons fair and well-defined.

\textbf{Cumulative evaluation.} For a given algorithm, let~$H_t^{\mathrm{alg}} \in \hist$ denote the cumulative history consisting of all \emph{unique} expensive evaluations obtained up to budget~$t$.
The performance of the algorithm at budget~$t$ is computed from the cumulative archive
$\hat A_t^{\mathrm{alg}} \coloneq \antiOf{\F{f}}(H_t^{\mathrm{alg}})$.
Accordingly, for population-based baselines, we evaluate the antichain induced by the union of all queried candidates up to time~$t$, not merely the final population.
This makes the comparison commensurate with our framework, which explicitly accumulates information over time.
Repeated proposals of previously evaluated candidates are served from a cache and do not consume additional budget.

\textbf{Target functionality.}
When a queried implementation fails to satisfy the target functionality~$\F{f}$, it is assigned the worst resource value~$\top_{\resPosetR}$.
This converts target infeasibility into immediate domination and permits direct comparison with methods that do not natively reason about functionalities.

\textbf{Unique resource assignment.}
In the setting of \Cref{prb:online_codesign}, a single implementation may be assigned an antichain of non-dominated resources due to the diagram completion. Since the \glspl{acr:ea} we compare with expect each implementation to be assigned a \emph{single point} in the objective space, we restrict our numerical case studies to instances where every completion yields a unique resource value.

In addition to these baselines, we compare with a structure-agnostic quasi-random low-discrepancy sampler based on the \HALTON sequence~\cite{halton1964algorithm,leobacher2014introduction}.
This baseline uses the same proposal sequence as our method but performs no elimination.
Within \Cref{alg:elimination_sampler} and \Cref{alg:elimination_sampler_multigraph}, the same \HALTON sequence is used as the base sampling measure~$\mu$.

\subsection{Synthetic Problems}

We first evaluate \Cref{alg:elimination_sampler} on analytically generated problems designed to isolate the effect of structural optimism (\Cref{sec:optimistic_evaluator_example}).
In all synthetic experiments, the target functionality is fixed and satisfied by construction, so the learning problem reduces to recovering the nondominated antichain of a resource map~$g \colon [0,1]^d \to [0,1]^m$ under the componentwise order on both the implementation and resource spaces.

\subsubsection{Monotone Problems}
To generate instances that adhere to the monotone structure described in \Cref{sec:monotonicity}, we construct a piecewise-constant map~$g \colon [0,1]^d \to [0,1]^m$ by summing~$K$ monotone step atoms per output coordinate.
Specifically, we draw thresholds~$t_k \in [0,1]^d$ and nonnegative weights~$w_{j,k} \ge 0$ satisfying~$\sum_{k=1}^{K} w_{j,k} = 1$ for~$j=1,\dots,m$, and define
$g_j(x) = \sum_{k=1}^{K} w_{j,k}\,\mathbf{1}\{x \succeq t_k\}, j=1,\dots,m$,
where~$x \succeq t_k$ is interpreted componentwise.
Because each indicator~$\mathbf{1}\{x \succeq t_k\}$ is monotone nondecreasing in~$x$, every coordinate~$g_j$ is monotone.
The weight normalization ensures~$g(x) \in [0,1]^m$ for every~$x \in [0,1]^d$.

\subsubsection{Lipschitz Problems}
To generate instances that adhere to the Lipschitz structure described in \Cref{sec:lipschitzness}, we draw a random matrix~$A \in \mathbb{R}^{m \times d}$ with spectral norm (i.e., largest singular value)~$\|A\|_2 = L$ and a random phase vector~$b \in \mathbb{R}^m$, and define~$g \colon [0,1]^d \to [0,1]^m$ by
$g_j(x) = \phi\big([Ax+b]_j\big)$, $j \in \set{1,\dots,m}$,
where~$\phi \colon \mathbb{R} \to [0,1]$ is the unit-slope triangle wave
defined by~$\phi(t) = t \bmod 2$ when~$t \bmod 2 \le 1$ and~$\phi(t) = 2 - (t \bmod 2)$ otherwise,
satisfying~$|\phi'(t)| = 1$ almost everywhere. 
At any non-fold point the Jacobian is~$J_g(x) = D(x) A$ with~$D(x) = \operatorname{diag}(\pm 1)$ orthogonal, 
so~$\|J_g(x)\|_2 = \|A\|_2 = L$ almost everywhere.
The fold points form a measure-zero set, and the triangle wave ensures~$g(x) \in [0,1]^m$ for all inputs.
To see that~$g$ is $L$-Lipschitz with respect to the $\ell_2$ norm, note that~$\phi$ is $1$-Lipschitz and acts component-wise, so~$\|g(x) - g(x')\|_2 \leq \|Ax - Ax'\|_2 \leq \|A\|_2\,\|x - x'\|_2 = L\,\|x - x'\|_2$,
and the bound is attained along the top singular vector of~$A$ at almost every point.

\input{Figures/table_random_monotone}
\input{Figures/table_random_lipschitz}

\subsubsection{Results}
In \Cref{table:random_monotone}, we report the cumulative hypervolume difference $\HVD$ on 8 random monotone problems, while in \Cref{table:random_lipschitz}, we report the same metric on 8 random Lipschitz problems with~$L=2$. 
For the cumulative~$\HVD$, a lower value is better, and the worst-case value scales linearly with the number of iterations. 
Each figure represents an average of 100 independent runs. 

On the monotone benchmarks, \Cref{alg:elimination_sampler} achieves the best performance on 7 out of 8 instances, often by a substantial margin.
For example, on instance M3 it improves over the second-best method (\NSGAIII) by roughly a factor of three.
On the Lipschitz benchmarks, \Cref{alg:elimination_sampler} attains the best result on 5 out of 8 instances and the second-best result on 2 of the remaining 3.
Notably, while \MOEAD is competitive on several Lipschitz instances, it exhibits high variance across problems (e.g., strong on L2 and L5 but poor on L8), whereas \Cref{alg:elimination_sampler} delivers consistently low $\HVD$ across all instances.

These results separate two effects.
The improvement over the pure \HALTON baseline shows the value of elimination itself, beyond uniform low-discrepancy coverage of the implementation space.
The additional improvement over the \glspl{acr:ea} shows the benefit of exploiting explicit structural knowledge, such as monotonicity or Lipschitz continuity, that is unavailable to generic population-based heuristics.
Overall, the synthetic experiments support the central claim of the paper: when valid optimistic certificates can be constructed, they translate directly into improved sample efficiency.

\subsection{Realistic Co-Design Problems}

We next consider two realistic problems of intermodal mobility and heterogeneous multi-robot systems co-design.

\subsubsection{Intermodal Mobility System}
\label{sec:intermodal_mobility}
This problem is a realistic co-design problem for intermodal urban mobility, following the model of~\cite{zardini2022co, zardini2023co}.
A central planner jointly selects \gls{acr:av} characteristics in the context of an \gls{acr:amod} service, and public-transit service levels in order to serve a prescribed travel demand while trading off average travel time against total system cost.

The transportation system is modeled as an edge-labeled digraph~$G = \tupIII{V}{A}{c}$ composed of four layers: a road network for \glspl{acr:av}, a road network for micromobility vehicles, a public transit network, and a walking network, connected by mode-switching arcs. 
Customer movements are represented by a set of travel requests~$Q = \set{\tupIII{o_m}{d_m}{ \alpha_m}}_{m=1}^{M}$, where~$o_m, d_m \in V$ are origin--destination pairs and~$\alpha_m > 0$ is the request rate.
A multi-commodity flow model routes customers and rebalancing vehicles across the intermodal network subject to flow conservation and road capacity constraints; we refer the reader to~\cite{zardini2022co} for the full specification.

In the notation of \Cref{def:dpi}, the system-level \gls{acr:dpi} $\dprb_{\mathrm{mob}} \in \dpiOf{\impSetI_{\mathrm{mob}}}{\funPosetF_{\mathrm{mob}}}{\resPosetR_{\mathrm{mob}}}$ is defined as follows.
The functionality \poset is $\funPosetF_{\mathrm{mob}} = \langle \mathcal{Q}, \preceq_\mathcal{Q} \rangle$, the set of serviceable travel demands ordered by inclusion and rate dominance. 
The resource \poset is $\resPosetR_{\mathrm{mob}} = \tupII{\mathbb{R}_{\geq 0}^2}{\preceq}$ with the componentwise order, representing the pair $\tup{t_{\mathrm{avg}}, C_{\mathrm{tot}}}$ of average travel time per trip and total transportation cost (combining fleet acquisition, operational, and infrastructure costs for both \glspl{acr:av} and public transit).
Each implementation~$\I{i} \in \impSetI_{\mathrm{mob}}$ is a tuple $\I{i} = \tup{v_V, n_V, n_S}$, where~$v_V \in [20, 50]$~mph is the \gls{acr:av} achievable speed,~$n_V \in \set{0, \dots,5000}$ is the \gls{acr:av} fleet size, and $n_S \in [0,1]$ is the subway service frequency. Note that we omit the details of tractable implementations.

The \emph{expensive block}~$v_q$ in this co-design diagram is the intermodal mobility system itself: for each candidate~$\tup{v_V, n_V, n_S}$, the resource map~$\reqImp{i} = \tup{t_{\mathrm{avg}}, C_{\mathrm{tot}}}$ is obtained by solving a multi-commodity flow \gls{acr:lp} that minimizes
$t_{\mathrm{avg}} = \frac{1}{\alpha_{\mathrm{tot}}} \sum_{\substack{m \in \{1,\dots,M\}, \langle i,j \rangle \in A}} t_{ij} \cdot f_m(i,j)$,
%
%
over customer flows~$\{f_m\}$ and \gls{acr:av} rebalancing flows~$\{f_V\}$, subject to flow conservation, capacity, and fleet-size constraints. 
Each evaluation of~$v_q$ therefore requires solving a large-scale \gls{acr:lp}, making it the computational bottleneck of the co-design of the intermodal mobility system and the natural target for adaptive sampling via \Cref{alg:elimination_sampler_multigraph}.

\subsubsection{Heterogeneous Multi-Agent System}
\label{sec:heterogeneous_fleet}
This problem is a realistic co-design problem for heterogeneous multi-agent systems performing area coverage and object detection, following~\cite{stralz2026task}. A central designer jointly chooses robot hardware, fleet composition, and a coverage planner to satisfy a prescribed task profile while trading off mission completion time, total cost, and energy consumption.

The system is represented by a compositional co-design diagram with four interconnected design blocks: a \emph{fleet composer}, a \emph{planner}, an \emph{executor}, and an \emph{evaluator}~\cite{stralz2026task}. The fleet may contain three robot types: a medium-sized aerial robot, a large aerial robot, and a ground robot. Each robot type is defined by an actuation module, a battery module chosen from eight chemistries, and a fixed sensing module. The actuation module determines speed, acceleration, and turning radius; the battery chemistry determines specific energy density and energy-cost density; and the sensing module determines sensing radius and target detection accuracy. The planner is selected from a catalog of area-coverage algorithms.

In the notation of \Cref{def:dpi}, the system-level \gls{acr:dpi} $\dprb_{\mathrm{het}} \in \dpiOf{\impSetI_{\mathrm{het}}}{\funPosetF_{\mathrm{het}}}{\resPosetR_{\mathrm{het}}}$ is defined as follows.
The functionality \poset is $\funPosetF_{\mathrm{het}} = \langle \mathcal{T}, \preceq_{\mathcal{T}} \rangle$, the set of task profiles ordered by dominance, where each task profile specifies minimum coverage-percentage thresholds over one or more search spaces.
The resource \poset is $\resPosetR_{\mathrm{het}} = \tupII{\mathbb{R}_{\geq 0}^2}{\preceq}$ with the componentwise order, representing the tuple $\tup{t_{\mathrm{mission}}, C_{\mathrm{tot}}}$ of mission completion time and total fleet acquisition cost.
Each~$\I{i} \in \impSetI_{\mathrm{het}} \subseteq \mathbb{N}^3$ is a tuple $\I{i} = \tupIII{n_{1}}{n_{2}}{n_{3}}$ specifying the number of deployed robots per type.

The \emph{expensive block}~$v_q$ in this co-design diagram is the planner--executor simulation pipeline: for each candidate~$\I{i}$, the resource map~$\reqImp{i} = \tup{t_{\mathrm{mission}}, C_{\mathrm{tot}}}$ is obtained by executing the selected planner to generate waypoint assignments, propagating these through a physics-based trajectory simulator that computes dynamically feasible paths and evaluating area-coverage metrics over the resulting trajectory collection.
Each evaluation of~$v_q$ therefore requires running a complete multi-robot simulation, making it the computational bottleneck of the system co-design.

\subsubsection{Case Study Details}
For both problems in \Cref{sec:intermodal_mobility,sec:heterogeneous_fleet}, the optimal nondominated resource sets are intractable to compute exactly, and robust empirical comparisons would require repeated evaluations of expensive simulation models. \
We therefore discretize the implementation spaces to $\card{\impSetI_{\mathrm{mob}}}=2197$ and $\card{\impSetI_{\mathrm{het}}}=1000$, enabling cached simulation outputs.

For the \glspl{acr:ea}, we tune over population sizes $\set{50,100,200}$, distribution indices $\set{5,10,15}$, and crossover probabilities $\set{0.1,0.5,0.9}$.
For \qLogEHVI, the Gaussian process fitting and acquisition function optimization are aware of the noiseless setup.
Our method uses the relaxed monotone optimistic evaluation of \Cref{sec:monotonicity}, with uniform forced acceptance probability $\delta=0.05$. 
For $\dprb_{\mathrm{mob}}$, the intuition behind monotonicity is that $t_{\mathrm{avg}}$ likely decreases with a larger \gls{acr:av} fleet while $C_{\mathrm{tot}}$ increases, and analogous reasoning applies to the remaining design variables in $\dprb_{\mathrm{mob}}$ and for $\dprb_{\mathrm{het}}$.
Because monotonicity may not hold exactly—for instance, larger fleets can increase mission time $t_{\mathrm{mission}}$ through more complex trajectories—we use a more conservative elimination rule, replacing that maximum of~\eqref{eq:monotonicity_opt_set} by the average of its $K$ largest elements for $K \in \set{1,2,5,10,15}$. 
We also cap the rejection-sampling loop in \Cref{alg:elimination_sampler_multigraph} at~50 iterations.

\subsubsection{Results}

\input{Figures/intermodal_mobility}
\input{Figures/heterogeneous_coverage}

\Cref{fig:intermodal_mobility,fig:heterogeneous_coverage} report the $\HVD$ as a function of the number of iterations for the intermodal mobility and the heterogeneous multi-agent co-design problems, respectively.
Each curve is an average of 500 independent runs (50 for \qLogEHVI), and the shaded regions represent half the standard deviations.
In both settings, \Cref{alg:elimination_sampler_multigraph} recovers the exact antichain substantially faster than all baselines, reaching low $\HVD$ values with far fewer evaluations of the expensive block~$v_q$.

The two problems exhibit notably different baseline behaviors.
In \Cref{fig:heterogeneous_coverage}, the low-discrepancy \HALTON sampler performs competitively, suggesting that the true Pareto antichain is well-dispersed across the implementation space, so quasi-random space-filling already yields adequate coverage.
Nevertheless, our method still improves upon \HALTON, reaching an $\HVD$ of $10^{-5}$ in roughly half the iterations.
In \Cref{fig:intermodal_mobility}, by contrast, \HALTON performs poorly and is outpaced by several \glspl{acr:ea}, indicating that the antichain is concentrated in a small region of the implementation space that space-filling heuristics are slow to locate.
Our approach exploits the (approximate) monotone structure of the co-design diagram to focus evaluations precisely on that region: it requires approximately $33\%$ fewer samples than \RVEA---the best-performing baseline---to reach an $\HVD$ of $10^{-3}$, and approximately $60\%$ fewer samples to reach $10^{-5}$.
Further, although \qLogEHVI converges faster than the other methods early on, it eventually plateaus and struggles to recover the exact antichain.
Finally, in \Cref{fig:heterogeneous_coverage}, we report performance when the baseline methods optimize over the full implementation space of~$\cdprb$.
We observe that including tractable implementations in the decision space of the \glspl{acr:ea} drastically degrades their performance.

\input{Figures/table_ablation}

To isolate the effect of elimination under approximate monotonicity, we report performance metrics for our approach with and without relaxed rejections on the intermodal mobility benchmark in~\Cref{table:ablation}, where exact recovery means the true antichain of the discretized benchmark is identified up to machine precision. 
The results show that our approach degrades gracefully under mild model misspecification, and that relaxed rejection substantially improves recovery probability at the cost of a slight reduction in convergence speed.

\input{Figures/table_time}

Finally, \Cref{table:time} reports the per-step CPU time of \Cref{alg:elimination_sampler_multigraph}, \HALTON, \NSGAIII, \qLogEHVI and the mobility \gls{acr:lp}~$v_q$. 
Our method is slower than \HALTON and \NSGAIII due to the rejection-sampling loop, with roughly a twofold increase in average runtime and up to an order-of-magnitude increase in the worst case.
Nevertheless, even its worst-case CPU time is still two orders of magnitude smaller than the minimum time needed to solve a single mobility~\gls{acr:lp} and lower than the \gls{acr:bo} baseline, making the additional overhead worthwhile given the improvement in convergence.

%% file: Figures/table_random_monotone.tex
\begin{table*}
\setlength{\abovecaptionskip}{0pt}
\centering
\footnotesize
\renewcommand{\arraystretch}{\TableVerStretch}
\caption{
The cumulative hypervolume difference $\HVD$ ($\downarrow$) on randomly generated monotone problems with $\impSetI=\mathbb{R}^3$, $\funPosetF=\mathbb{R}^2$, and $\resPosetR=\mathbb{R}^2$ after 4000 iterations.
The best-performing method is shown in \textbf{bold}, and the second-best is \underline{underlined}.
}
\begin{adjustbox}{max width=\textwidth, center}
\begin{NiceTabular}{c|cccccccc}
    \toprule
    \textbf{Method} & M1 & M2 & M3 & M4 & M5 & M6 & M7 &  M8 \\
    \midrule
    \OPTELIM & \bm{$8.81\times10^{1}$} & \bm{$1.94\times10^{2}$} & \bm{$1.14\times10^{2}$} & \bm{$1.78\times10^{2}$} & \bm{$4.97\times10^{1}$} & \bm{$4.87\times10^{1}$} & $3.26\times10^{2}$      & \bm{$3.46\times10^{1}$} \\
    \HALTON  & $3.48\times10^{2}$      & $5.97\times10^{2}$      & $6.88\times10^{2}$      & $4.41\times10^{2}$      & $3.35\times10^{2}$      & $1.89\times10^{2}$      & $7.10\times10^{2}$      & $1.82\times10^{2}$ \\
    \NSGAIII & \ul{$1.45\times10^{2}$} & \ul{$3.35\times10^{2}$} & \ul{$3.78\times10^{2}$} & \ul{$3.14\times10^{2}$} & \ul{$1.91\times10^{2}$} & $1.20\times10^{2}$      & \ul{$2.28\times10^{2}$} & $7.48\times10^{1}$ \\
    \MOEAD   & $1.57\times10^{2}$      & $3.41\times10^{2}$      & $6.05\times10^{2}$      & $3.82\times10^{2}$      & $2.34\times10^{2}$      & $1.59\times10^{2}$      & \bm{$1.64\times10^{2}$} & \ul{$5.93\times10^{1}$} \\
    \RVEA    & $4.77\times10^{2}$      & $7.04\times10^{2}$      & $1.29\times10^{3}$      & $4.04\times10^{2}$      & $5.65\times10^{2}$      & \ul{$1.06\times10^{2}$} & $2.93\times10^{2}$      & $1.21\times10^{2}$ \\
    \KGB     & $1.69\times10^{2}$      & $3.78\times10^{2}$      & $4.83\times10^{2}$      & $3.47\times10^{2}$      & $2.25\times10^{2}$      & $1.37\times10^{2}$      & $3.06\times10^{2}$      & $8.56\times10^{1}$ \\
    \bottomrule
\end{NiceTabular}
\end{adjustbox}
\label{table:random_monotone}
\end{table*}

%% file: Figures/table_random_lipschitz.tex
\begin{table*}
\setlength{\abovecaptionskip}{0pt}
\centering
\footnotesize
\renewcommand{\arraystretch}{\TableVerStretch}
\caption{
The cumulative hypervolume difference $\HVD$ ($\downarrow$) on randomly generated Lipschitz problems with $\impSetI=\mathbb{R}^4$, $\funPosetF=\mathbb{R}^2$, and $\resPosetR=\mathbb{R}^2$ after 2000 iterations.
The best-performing method is shown in \textbf{bold}, and the second-best is \underline{underlined}.
}
\begin{adjustbox}{max width=\textwidth, center}
\begin{NiceTabular}{c|cccccccc}
    \toprule
    \textbf{Method} & L1 & L2 & L3 & L4 & L5 & L6 & L7 & L8 \\
    \midrule
    \OPTELIM & \bm{$5.89\times10^{1}$} & $1.85\times10^{1}$      & \bm{$4.92\times10^{1}$} & \bm{$2.26\times10^{1}$} & \ul{$3.80\times10^{1}$} & \ul{$1.79\times10^{1}$} & \bm{$4.63\times10^{1}$} & \bm{$4.20\times10^{1}$} \\
    \HALTON  & $2.84\times10^{2}$      & $2.77\times10^{1}$      & $2.55\times10^{2}$      & \ul{$2.52\times10^{2}$} & $7.04\times10^{1}$      & $2.94\times10^{1}$      & $9.85\times10^{1}$      & \ul{$2.96\times10^{2}$} \\
    \NSGAIII & $1.82\times10^{2}$      & $2.00\times10^{1}$      & $1.29\times10^{2}$      & $3.54\times10^{2}$      & $4.93\times10^{1}$      & $2.12\times10^{1}$      & $6.37\times10^{1}$      & $8.21\times10^{2}$      \\
    \MOEAD   & $3.13\times10^{2}$      & \bm{$1.39\times10^{1}$} & $1.25\times10^{2}$      & $3.87\times10^{2}$      & \bm{$3.35\times10^{1}$} & \bm{$1.59\times10^{1}$} & \ul{$4.71\times10^{1}$} & $1.04\times10^{3}$      \\
    \RVEA    & \ul{$1.79\times10^{2}$} & \ul{$1.76\times10^{1}$} & \ul{$1.18\times10^{2}$} & $3.34\times10^{2}$      & $4.46\times10^{1}$      & $2.03\times10^{1}$      & $5.66\times10^{1}$      & $1.01\times10^{3}$      \\
    \KGB     & $2.37\times10^{2}$      & $2.02\times10^{1}$      & $1.55\times10^{2}$      & $4.39\times10^{2}$      & $4.95\times10^{1}$      & $2.17\times10^{1}$      & $7.04\times10^{1}$      & $9.57\times10^{2}$      \\
    \bottomrule
\end{NiceTabular}
\end{adjustbox}
\label{table:random_lipschitz}
\end{table*}

%% file: Figures/intermodal_mobility.tex
\begin{figure}[tb]
\centering
\setlength{\abovecaptionskip}{0pt}
\includegraphics[width=0.85\columnwidth]{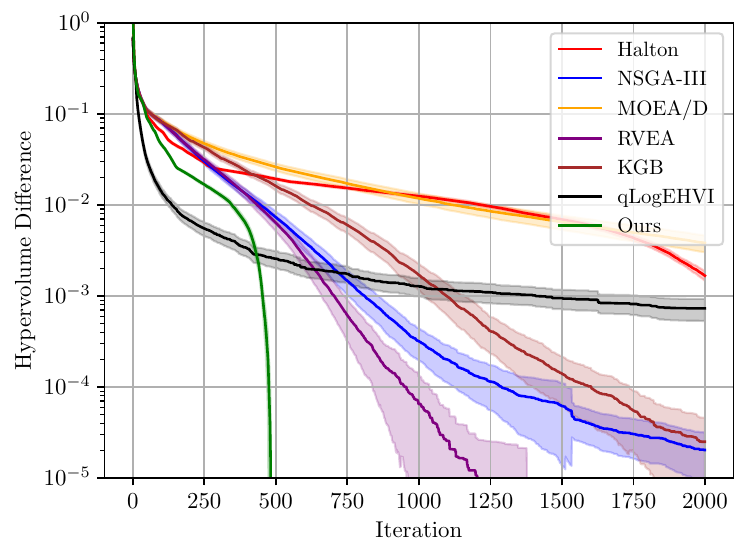}
\caption{
Hypervolume difference $\HVD$ as a function of the number of iterations for the intermodal mobility co-design problem. 
}
\label{fig:intermodal_mobility}
\end{figure}

%% file: Figures/heterogeneous_coverage.tex
\begin{figure}[tb]
\centering
\setlength{\abovecaptionskip}{0pt}
\includegraphics[width=0.85\columnwidth]{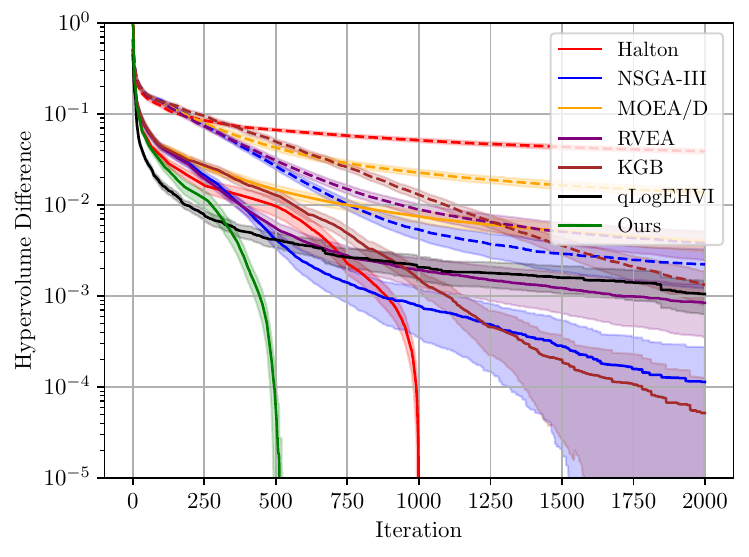}
\caption{
Hypervolume difference $\HVD$ as a function of the number of iterations for the heterogeneous multi-agent system co-design problem.
Dashed lines show performance when methods optimize over all implementations in~$\cdprb$.
}
\label{fig:heterogeneous_coverage}
\end{figure}

%% file: Figures/table_ablation.tex
\begin{table}[t]
\setlength{\abovecaptionskip}{0pt}
\centering
\footnotesize
\renewcommand{\arraystretch}{\TableVerStretch}
\caption{Ablation on relaxed rejections in the intermodal mobility case study. All metrics are evaluated at iteration $N=2000$.}
\begin{adjustbox}{max width=\columnwidth, center}
\begin{NiceTabular}{c|cccc}
    \toprule
    \textbf{Metric} & \Block{1-1}{\OPTELIM \\ ($\delta=0, K=1$)} & \Block{1-1}{\OPTELIM \\ ($\delta=0.05, K=2$)} & \RVEA & \HALTON \\
    \midrule
    \textbf{Cumulative \bm{$\HVD$}} ($\downarrow$) & $18.53$ & $19.95$ & $27.27$ & $41.61$ \\
    \textbf{Exact Recovery} ($\uparrow$) & $89.8\%$ & $100\%$ & $99.8\%$ & $0\%$ \\
    \bottomrule
\end{NiceTabular}
\end{adjustbox}
\label{table:ablation}
\end{table}

%% file: Figures/table_time.tex
\begin{table}[tb]
\setlength{\abovecaptionskip}{0pt}
\centering
\footnotesize
\renewcommand{\arraystretch}{\TableVerStretch}
\caption{CPU time per step for \Cref{alg:elimination_sampler_multigraph}, \HALTON, \NSGAIII, \qLogEHVI, and the mobility \gls{acr:lp}~$ v_q$.}
\begin{adjustbox}{max width=\columnwidth, center}
\begin{NiceTabular}{c|cccc|c}
    \toprule
    \textbf{Time} & \OPTELIM & \HALTON & \NSGAIII & \qLogEHVI & Sim. $v_q$ \\
    \midrule
    \textbf{Avg.} & $2.61\times10^{-3}$ & $0.92\times10^{-3}$ & $1.07\times10^{-3}$ & $3.88\times10^{-1}$ & $5.84\times10^{1}$ \\
    \textbf{Min.} & $0.16\times10^{-3}$ & $0.15\times10^{-3}$ & $0.17\times10^{-3}$ & $1.34\times10^{-1}$ & $5.22\times10^{0}$ \\
    \textbf{Max.} & $5.23\times10^{-2}$ & $2.01\times10^{-3}$ & $2.19\times10^{-3}$ & $9.57\times10^{-1}$ & $1.70\times10^{2}$ \\
    \bottomrule
\end{NiceTabular}
\end{adjustbox}
\label{table:time}
\end{table}

%% file: Sections/conclusion.tex
\section{Conclusions}
\label{sec:conclusion}

We developed an online learning approach for multi-objective decision-making in monotone co-design that uses history-dependent optimistic bounds on unknown functionality and resource mappings to rule out unpromising implementations before incurring costly evaluations. 
The method is guaranteed not to eliminate any design that could still improve the current target-feasible Pareto set, and its feasible search region shrinks monotonically as more data are collected. 
We showed how this principle can be instantiated under several useful structural assumptions, and extended it to compositional co-design problems by propagating subsystem-level optimistic bounds through multigraph-structured design models to obtain system-level certificates with the same correctness guarantees. 
Across synthetic and application-driven benchmarks, including mobility and multi-robot co-design, the resulting algorithms achieved comparable solution quality with far fewer expensive evaluations than quasi-random search and leading multi-objective baselines.

Several directions remain open.
First, deriving quantitative, problem-dependent sample complexity bounds that characterize how fast the admissible set shrinks would deepen the theoretical understanding of the statistical complexity of system design. 
This would entail not only understanding the complexity of learning a single block, but also characterizing how the compositional completion of the co-design diagram amplifies or attenuates that complexity.
Second, extending the compositional framework to settings with multiple expensive blocks---requiring coordinated exploration across several unknown subsystems---is an important step toward broader applicability.
Third, incorporating stochastic or noisy evaluations through high-probability confidence bounds would expand the framework to experimental settings where exact observations are unavailable.
Fourth, more sophisticated acquisition strategies---such as information-directed sampling or priority rules derived from pessimistic evaluators---may yield noticeable sample-efficiency gains, particularly in high-dimensional implementation spaces.